
\documentclass[journal]{IEEEtran}
\IEEEoverridecommandlockouts   

\usepackage{amsmath,amssymb}
\usepackage{graphicx}
\usepackage[square,sort,comma,numbers]{natbib}
\usepackage{epstopdf}
\usepackage[italian,french,english]{babel}


%
%

\def\R{\mathcal{R}}

\newtheorem{theorem}{Theorem}
\newtheorem{corollary}[theorem]{Corollary}

\newtheorem{proof}{Proof}
%

\begin{document}

\title{\LARGE Nonlinear Control of a DC MicroGrid for the Integration of Photovoltaic Panels}
\author{A. Iovine$ ^a$\thanks{$ ^a$ Alessio Iovine, Elena De Santis and Marika Di Benedetto are with the Center of Excellence DEWS, Department of Information Engineering, Computer Science and Mathematics, University of L'Aquila, Italy.  Email: alessio.iovine@graduate.univaq.it, \{elena.desantis, mariadomenica.dibenedetto\}@univaq.it. Corresponding author: Alessio Iovine. Permanent email: alessio.iovine@hotmail.com}, S. B. Siad$ ^b$, G. Damm$ ^b$\thanks{$ ^b$ Gilney Damm and Sabah B. Siad are with IBISC - Universitè d'Evry Val d'Essonne, Evry, France. Email: gilney.damm@ibisc.fr, siadsabah@yahoo.fr}, E. De Santis$ ^a$, M. D. Di Benedetto$ ^a$ }

\maketitle

\begin{abstract}
New connection constraints for the power network (Grid Codes) require more flexible and reliable systems, with robust solutions to cope with uncertainties and intermittence from renewable energy sources (renewables), such as photovoltaic arrays. The interconnection of such renewables with storage systems through a Direct Current (DC) MicroGrid can fulfill these requirements. A "Plug and Play" approach based on the "System of Systems" philosophy using distributed control methodologies is developed in the present work. This approach allows to interconnect a number of elements to a DC MicroGrid as power sources like photovoltaic arrays, storage systems in different time scales like batteries and supercapacitors, and loads like electric vehicles and the main AC grid. The proposed scheme can easily be scalable to a much larger number of elements.
\end{abstract}%

\renewcommand{\abstractname}{Note to Practitioners}

\begin{abstract}
Renewable energy can play a key role in producing local, clean and inexhaustible energy to supply the world’s increasing demand for electricity. Photovoltaic conversion of solar energy is a promising solution and is the best fit in several situations. However, its intermittent nature remains a real difficulty that can create instability. To answer to the new constraints of connection to the network (grid codes) for either solar plants than distributed generation, one possible solution is the use of Direct Current (DC) MicroGrids including storage systems, in order to integrate the electric power generated by these photovoltaic arrays. One of the main reasons is the fact that photovoltaic panels, batteries, supercapacitors and electric vehicles are DC. On the other hand, reliable stable control of DC MicroGrids is still an open problem. In particular it lacks rigorous analysis that can establish the operation regions and stability conditions for such MicroGrids. Current works that consider realistic grids usually apply from-the-shelf solutions that do not study the dynamics of such grids. While more rigorous studies just consider too much simplified grids that does not represent the conditions from real life applications. The present work presents nonlinear controllers capable to stabilize the DC MicroGrid, with rigorous analysis on the sufficient conditions and region of operation of the proposed control.
\end{abstract}%

\renewcommand{\abstractname}{Keywords}
\begin{abstract}
DC power systems, Power generation control, Photovoltaic power systems, Lyapunov methods
\end{abstract}%
\normalsize
\section{INTRODUCTION}\label{Sec_Introduction}
Renewable energy can play a key role in producing local, clean and inexhaustible energy to supply the world's increasing demand for electricity. Photovoltaic (PV) conversion of solar energy is a promising way to meet the growing demand for energy, and is the best fit in several situations \cite{Eltawil2010PVconnectedgridproblems}. However, its intermittent nature remains a real disability that can create voltage (or even frequency in the case of islanded MicroGrids) instability for large scale grids. In order to answer to the new constraints of connection to the network (Grid Codes) it is possible to consider storage devices \cite{Barton2004energystorage}, \cite{Krajacic20112073}; the whole system will be able to inject the electric power generated by photovoltaic panels to the grid in a controlled and efficient way. As a consequence, it is necessary to develop a strategy for managing energy in relation to the load and the storages constraints. Direct Current (DC) microgrids are attracting interest thanks to their ability to easily integrate modern loads, renewables sources and energy storages \cite{Piagi2004}, \cite{Piagi2006}, \cite{Iravani2007}, \cite{Guerrero2014LVDC}, \cite{Guerrero2013advancedcontrol} since most of them (like electric vehicles, batteries and photovoltaic panels) are naturally DC: therefore, in this paper a DC microgrid composed by a source, a load, two storages working in different time scales, and their connecting devices is considered.

The utilized approach is based on a "Plug and Play" philosophy: the global control will be carried out at local level by each actuator, according to distributed control paradigm. The controller is developed in a distributed way for stabilizing each part of the whole system, while performing power management in real time to satisfy the production objectives while assuring the stability of the interconnection to the main grid.

Even if control techniques for converters are a well known research field \cite{siraramiirez_silva-ortigoza_2006}, \cite{Rantzer2010comparisonHybridConverters}, \cite{A_chen_damm_cdc_2014}, \cite{chen:hal-01159853}, the models generally used assume to have full controllability of the system \cite{A_tahim_pagano_lenz_stramosk_2015}, \cite{A_bidram_davoudi_lewis_guerrero_2013}; in reality, due to technical reasons, systems have an additional variable (a capacitor) on the source side \cite{Walker2004cascadedPVconnection}. Controlling this capacitor implies in leaving another variable (the grid side capacitor) uncontrolled; this remaining dynamics is usually neglected by the assumption that it is connected (and implicitly stabilized) by an always stable strong main grid. Removing this assumption to consider a realistic grid implies that this dynamics needs to be taken into account when studying grid stability. To the best of authors knowledge, no rigourous stability analysis has been developed for a realistic DC MicroGrid. In this paper convergence analysis is performed for all the dynamics composing each device with the target to extract a desired amount of power. In the same way, dynamics interaction is evaluated in order to obtain voltage stability in the DC MicroGrid in response to load and generation variations. Indeed as introduced in this paper, only a rigourous analysis of all the dynamics provide the grid stability conditions to be respected.

The adopted control strategy is shown to work both in case of time-varying uncontrolled load than in case of constant controlled load; both problems being relevant \cite{Marx2012}, \cite{Hamache2014backstepping}. The whole system provides protection against faults and suppresses interference, and has a positive impact on the behaviour of the complete electrical system. The final management system can be configurable and adaptable as needed.

This paper is organized as follows. In Section \ref{Sec_models} the model of the DC MicroGrid is introduced. Then in Section \ref{Sec_control_laws} the adopted control laws for each subsystem are proven to satisfy stability requirements. 
Section \ref{sec_simulation_results_energy} provides simulation results about the connected system behaviour, while in Section \ref{sec_conclusion} conclusions are provided.
%
\section{Problem definition}\label{Sec_problem_definition}
%
The reference framework is depicted in Figure \ref{Fig_microgrid_example}, where the isolated DC microgrid is represented. The targets would be to assure voltage DC grid stability while correctly feeding the load. To each element (PV array, battery, supercapacitor) a DC/DC converter is connected: their dynamical models are described in Section \ref{Sec_models}.
\begin{figure}
  \centering\includegraphics[width=1\columnwidth]{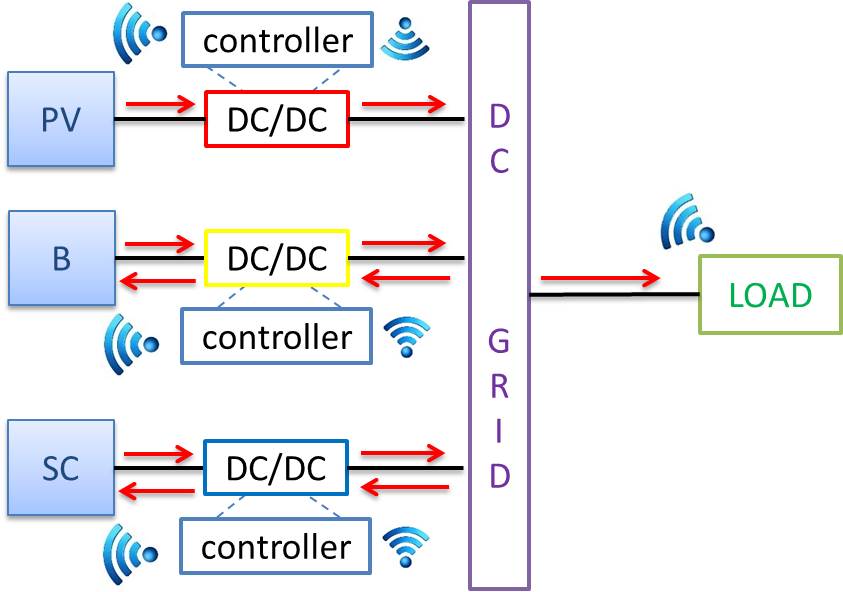}\\
  \caption{The considered framework}\label{Fig_microgrid_example}
\end{figure}
The whole control objective is then split in several tasks; the first one is to extract the maximum available power from the photovoltaic array. This maximum power production is obtained calculating the optimal value of the duty cycle in order to fix the PV array connected capacitor voltage to a given reference. Backstepping theory is used to stabilize the DC/DC boost converter that connects the solar array to the DC network.

The focus then moves to the storage systems and their connection to the DC network. In this paper, two kinds of storage are considered: a battery, which purpose is to provide/absorb the power when needed, and a supercapacitor, which purpose is to stabilize the DC grid voltage in case of disturbances. DC/DC bidirectional converters are necessary to enable the two modes of functioning (charge and discharge). The battery is assimilated as a reservoir which acts as a buffer between the flow requested by the network and the flow supplied by the production sources, and its voltage is controlled by the DC/DC current converter. Again, backstepping theory is used for designing the controller and assuring stability. With this structure, the DC grid is able to provide a continuous supply of good quality energy.

The three converters present in this system must, in a decentralized way, keep the stability of the DC network interconnecting all parts. The final management system can be configurable and adaptable as needed.
%
%
%
\subsection{Assumptions}\label{subsec_assumptions}
%
In this paper two main assumptions are made: the first one is the existence of a higher level controller which provide references to be accomplished by the local controllers \cite{Olivares2014Trendsmicrogrid}; the second one is about a proper sizing of each component of the microgrid in order to have feasible power balance.
%
\subsubsection{\textbf{Higher level controller}}\label{SubSubSec_references}
%
The power output coming from the sources needs to be properly coordinated and controlled. Here we assume that a higher level controller provides references for the local controllers: these references change every fixed time interval $T$ and concern the amount of power needed for the next time interval and the desired voltage value for the DC grid. The time interval $T$ is decided by the high level controller according to the computational time needed for calculations. These references are about the desired voltage to impose to the PV array and to the battery to obtain the needed amount of power, $V_1^*$ and $V_4^*$ respectively, and the desired voltage value for the DC grid, $V_9^*$. We need the references to be able to take into account a proper charge/discharge rate power for the supercapacitor; a state of charge about 50\% at the beginning of the time interval is the best starting point for efficiency reasons.
%
%
\subsubsection{\textbf{Energy balance}}\label{SubSubSec_power_balance}
%
Proper sizing of each component in a DC microgrid is an important feasibility requirement. In order to always satisfy the power demanded by the load, the sizing of the PV array, battery and supercapacitor fit some conditions related to the $P_{PV}$ produced power by the photovoltaic array, the $P_B$ and $P_{SC}$ stored power into the battery and the supercapacitor respectively, and the $P_L$ power absorbed by the load:
\begin{itemize}
  \item[i)] the sizing of the photovoltaic array is performed according to total energy needed into a whole day;
\small
\begin{equation}
\int_{0}^{D} P_{PV} \: dt \geq \int_{0}^{D} P_L \: dt
\end{equation}
\normalsize
where $D$ is equal to daytime, 24 hours;
  \item[ii)]
  the sizing of the battery and the supercapacitor are performed according to the energy balance in a $T$ time step , needed for selecting a new reference;
  \footnotesize
\begin{equation}
\left\| \int_{kT}^{(k+1)T} \left(P_{PV} + P_B - P_L  \right) \: dt \right\| \leq \frac{1}{2}\int_{kT}^{(k+1)T} P_{SC} \: dt \:\:\:\: \forall \: k 
\end{equation}
\normalsize
\end{itemize}

The last condition can be seen as the ability of the supercapacitor to fulfill the request to provide enough amount of power in the considered time interval; for sizing the supercapacitor we consider the worst scenario due to current load variations, i.e. the case where the supercapacitor needs to provide/absorb the maximum available current for all the time step. 

The complete sizing of these components is considered out of the scope at this point, and will be studied in future works.
%
%
%
\section{DC MicroGrid modeling}\label{Sec_models}
In this Section the considered framework depicted in Figure \ref{Fig_microgrid_example} is described. The PV array, battery and
supercapacitor are each one connected to the DC grid by a DC/DC converter. Here the circuital representation and the mathematical
model are given, based on power electronics averaging technique \cite{sanders_noworolski_liu_verghese_1991},
\cite{middlebrook_cuk_1977}.
\begin{equation}
\dot{x}(t) = f(x(t)) + g(x(t),u(t),d(t))+d(t)
\end{equation}
\begin{equation}\label{eq_all_syste_u}
x = \left[\:\: x_{1} \:\:  x_{2} \:\:  x_{3} \:\: x_{4} \:\:  x_{5} \:\:  x_{6} \:\: x_{7} \:\:  x_{8} \:\:  x_{9} \:\: \right]^T
\end{equation}
\begin{equation}\label{eq_all_syste_u}
u = \left[\:\: u_{1} \:\:\:\:  u_{2} \:\:\:\:  u_{3} \:\: \right]^T
\end{equation}
\begin{equation}\label{eq_all_syste_d}
d = \left[\:\:V_{PV}  \:\:\:\:  V_{B} \:\:\:\:  V_{S}  \:\:\:\: \frac{1}{R_L} \:\:\right]^T
\end{equation}
\begin{equation}\label{EQ_DC_microgrid}
\begin{array}{l}
\begin{cases}
\dot{x}_{1}=-\frac{1}{R_{1}C_{1}}x_{1} - \frac{1}{C_{1}}x_{3} + \frac{1}{R_{1}C_{1}}V_{PV}\\ &\mbox{} \\
\dot{x}_{2}=-\frac{1}{R_{2}C_{2}}x_{2} + \frac{1}{C_{2}}x_{3} - \frac{1}{C_{2}}u_{1}x_{3} + \frac{1}{R_{2}C_{2}}x_{9} \\ &\mbox{}
\\
\dot{x}_{3}=\frac{1}{L_{3}}\left[x_{1}-x_{2}-R_{01}x_{3}\right] \\ &\mbox{} \\ \:\:\:\:\:\:\:+\frac{1}{L_{3}}(x_{2}+({R_{01}-R_{02}})x_{3})u_{1}
\\ &\mbox{} \\
\dot{x}_{4}=-\frac{1}{R_{4}C_{4}}x_{4}-\frac{1}{C_{4}}x_{6}+\frac{1}{R_{4}C_{4}}V_{B} \\ &\mbox{} \\
\dot{x}_{5}=-\frac{1}{R_{5}C_{5}}x_{5}+\frac{1}{C_{5}}x_{6}-\frac{1}{C_{5}}u_{2}x_{6}+\frac{1}{R_{5}C_{5}}{x}_{9}  \\ &\mbox{} \\
\dot{x}_{6}=\frac{1}{L_{6}}x_{4}-\frac{1}{L_{6}}x_{5}-\frac{R_{04}}{L_{6}}x_{6}+\frac{1}{L_{6}}x_{5}u_{2}\\ &\mbox{} \\
\dot{x}_{7}=-\frac{1}{R_{7}C_{7}}x_{7}-\frac{1}{C_{7}}x_{8}+\frac{1}{R_{7}C_{7}}{x}_{9}  \\ &\mbox{} \\
\dot{x}_{8}=\frac{1}{L_{8}}V_{S}u_3 -\frac{R_{08}}{L_{8}}x_{8} -\frac{1}{L_{8}} x_7  \\ &\mbox{} \\
 \footnotesize
\dot{x}_{9} = \frac{1}{C_{9}}\left[\frac{x_{2}-{x}_{9}}{R_{2}} + \frac{x_{5}-{x}_{9}}{R_{5}} + \frac{x_{7}-{x}_{9}}{R_{7}} -x_9\frac{1}{{R}_{L}}\right]
\normalsize
 \end{cases}
\end{array}%
\end{equation}
\begin{figure}
	\centering
	\includegraphics[width=1\columnwidth]{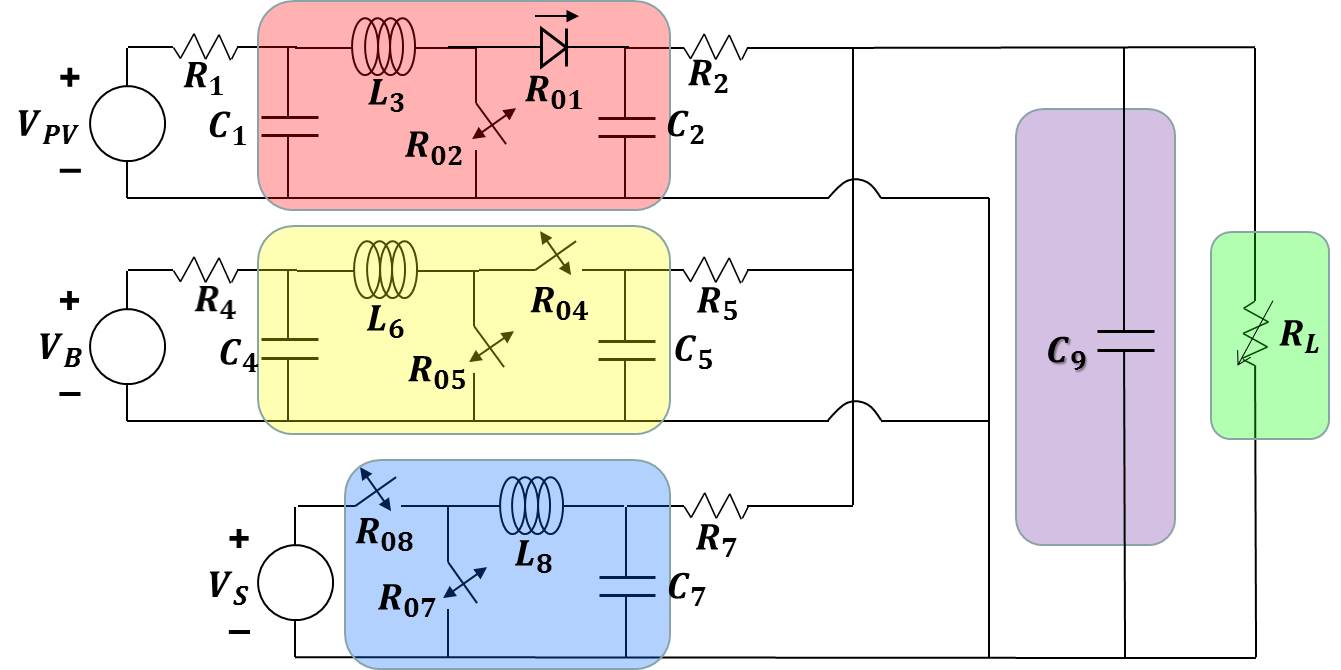}\\
	\caption{The interconnected system.}\label{Fig_interconnected_system}
\end{figure}

Here $x_9$ represents the voltage $V_{C_9}:\R\rightarrow\R^+$ of the capacitor $C_9$, which is the DC microgrid (depicted in violet in Figure \ref{Fig_interconnected_system}. Such voltage is influenced by the connections with load and sources: $R_{L}\in\R^+$ is a constant value representing the load resistance, while the positive values of $R_2$, $R_5$, $R_7$ are the resistances among the dynamics $x_9$ and the interconnected dynamics $x_2$, $x_5$ and $x_7$. These dynamics are the voltages $V_{C_2}:\R\rightarrow\R^+$,  $V_{C_5}:\R\rightarrow\R^+$ and  $V_{C_7}:\R\rightarrow\R^+$ of the capacitors $C_2$, $C_5$ and $C_7$, which are components of the three DC/DC converters connected, respectively, to the PV array (red area), the battery (yellow area) and the supercapacitor (blue area) depicted in Figure \ref{Fig_interconnected_system}. Their description is introduced in the following.
%
\subsection{PV branch}
%
The DC/DC converter needed to connect the PV array to the DC grid is a boost converter: it is illustrated in the red area in Figure \ref{Fig_interconnected_system}.
The equivalent circuit representation for the boost converter can be expressed using a state space average model. Three state
variables are needed for the system model: the capacitor voltages $V_{C_1}:\R\rightarrow\R^+$ and $V_{C_2}:\R\rightarrow\R^+$ ($x_1$ and $x_2$ respectively) and the inductor current $I_{L_3}:\R\rightarrow\R$ ($x_3$). 
$C_1$, $C_2$, $R_1$, $R_2$, $L_{3}$, $R_{01}$, $R_{02}$ are known positive values of the capacitors, resistances and the inductor
while the disturbance $V_{PV}\in\R^+$, is the photovoltaic panel voltage. 
The measured signals are the states $x_1$, $x_2$, $x_3$ and the PV array voltage $V_{PV}$. $u_1$ is the control input, which is defined as the duty cycle of the circuit; its target is to properly integrate the power coming form the PV array and at the same time to obtain the maximum amount of power from the solar energy. This is known as the Maximum Power Point Tracking (MPPT) and consists to regulate the voltage $V_{C_1}$ to its reference $V_1^*=x_1^*$ given by a higher level controller, and considered constant during each time interval $T$.
%
\subsection{Battery branch}
The DC/DC converter connecting the battery to the DC grid is a bidirectional converter: it is illustrated in the yellow area in Figure \ref{Fig_interconnected_system}. As for the boost converter, we select three state variables: the capacitor voltages $V_{C_4}:\R\rightarrow\R^+$ and $V_{C_5}:\R\rightarrow\R^+$ ($x_4$ and $x_5$ respectively) and the inductor current $I_{L_6}:\R\rightarrow\R$ ($x_6$). $C_4$, $C_5$, $R_4$, $R_5$, $L_{6}$, $R_{06}$ are known positive values of the circuit while the disturbance $V_{B}\in\R^+$ is the battery voltage. The measured signals are the states $x_4$, $x_5$, $x_6$ and the battery voltage $V_{B}$. The duty cycle $u_2$ is the control input; its target is to assign the reference $V_4^*=x_4^*$ value to $x_4$, forcing the battery to provide/absorb a desired amount of power, in a smooth way that maximizes its lifetime. This reference value is given by a higher controller not considered in the present paper, and is considered as constant during the time period $T$.
%
\subsection{Supercapacitor branch}
%
The DC/DC converter connecting the supercapacitor to the DC grid is a buck one\footnote{The converter used for the supercapacitor has different constraints from the one used for the battery, and for this reason is of a different topology.}. Only two state variables are needed, $x_7$ for the capacitor voltage $V_{C_7}:\R\rightarrow\R^+$ and $x_8$ for the inductor current $I_{L_8}:\R\rightarrow\R$. The positive values of $C_7$, $R_7$, $R_8$, $L_{8}$, $R_{08}$, and the disturbance $V_{S}:\R\rightarrow\R^+$ (supercapacitor voltage) are known at each $t$. The model describing the evolution of the supercapacitor voltage is taken as in \cite{Lifshitz2015Battery}. The duty cycle $u_3$ is the control input for this system. Its target is to control the capacitor voltage directly connected to the grid, which is $x_7$. The measured signals are the states $x_7$, $x_8$, and the supercapacitor voltage $V_{S}(t)>0$.
%
\section{Grid control}\label{Sec_control_laws}
In this section control laws are derived to fit the desired targets: for each DC/DC converter a proper control action is developed and the interconnection among all of them is then used to ensure grid stability.
Let us consider the state $x$, whose dynamics are described in (\ref{EQ_DC_microgrid}). Let us consider the set of all possible values of $x_1^*$ that generate a non negative current coming from the PV array as $x_1^*\in\left[\gamma_1 V_{PV}, V_{PV}\right]$, where $\gamma_1=\frac{R_{01}}{R_1}\frac{1}{1+\frac{R_{01}}{R_1}}$. Given a positive value of $R_L$ and $x_9^*$, let us moreover consider the set of all positive values of $x_4^*$ such that the balance of the currents is expressed by
 \begin{align}\label{eq_current_balance}
\frac{1}{R_L}{x}_{9}^* = \frac{1}{R_{2}}( x_2^*-{x}_{9}^*) + \frac{1}{R_{5}}( x_5^*-{x}_{9}^*)
\end{align}
where the values of $x_2^*$ and $x_5^*$ are defined as
%
\begin{align}\label{formula_z2_PV}
x_2^* = \frac{x_{9}^*-a_2}{2}+\frac{1}{2 } \sqrt{( x_{9}^* - a_2)^2 + 4 R_{2}C_2(\Delta_2 + a_2 x_{9}^*)}
\end{align}
%
\begin{equation}\label{formula_delta_PV}
\Delta_2 = \frac{1}{R_{1}C_2} \left(V_{PV} - x_1^* \right)\left[ x_1^* -  \frac{R_{02}}{R_{1}}(V_{PV} - x_1^*)  \right]
\end{equation}
%
\begin{equation}\label{formula_a2_PV}
a_2 = \frac{R_{01} - R_{02}}{R_{1}}\left(V_{PV} - x_1^* \right)
\end{equation}
%
\begin{equation}\label{formula_z5_BAT}
x_5^* = \frac{x_{9}^*}{2} + \frac{1}{2}\sqrt{{x_{9}^*}^2 +4 R_{5}C_5\Delta_5}
\end{equation}
%
%
%
\begin{equation}\label{formula_delta_BAT}
\Delta_5 = \frac{1}{R_{4}C_5} \left(V_{B}-x_4^* \right)\left[ -x_4^* + R_{04}\frac{1}{R_{4}}(V_{B}-x_4^*)  \right]
\end{equation}
and represent the solution of the dynamics $x_2$ and $x_5$ in equation (\ref{EQ_DC_microgrid}) setting $\dot{x}=0$

\begin{theorem}\label{Th_all_syste_stability}
For any given $x_1^*\in\left[\gamma_1 V_{PV}, V_{PV}\right]$, for any given $x_9^*>0$, for any given $x_4^*>0$ such that condition (\ref{eq_current_balance}) is satisfied, under the assumption that for each $t$ the conditions
%
\begin{equation}\label{eq_all_condition_on_x9}
 x_{2}+(R_{01}-R_{02})x_3 \neq 0, \:\:\:\:\: x_{5} \neq 0, \:\:\:\:\: x_9\neq0
\end{equation}
%
hold, there exist control laws $u_1$, $u_2$ and $u_3$ such that the closed loop system has an equilibrium point in $x^e$:
\begin{equation}\label{EQ_all_syste_equilibrium_points}
x^e = \left[
\begin{array}{c}
x_{1}^e \\
x_{2}^e \\
x_{3}^e \\
x_{4}^e \\
x_{5}^e \\
x_{6}^e \\
x_{7}^e \\
x_{8}^e \\
x_{9}^e 
\end{array}%
\right] =\left[
\begin{array}{c}
x_1^* \\
x_2 ^*\\
\frac{1}{R_{1}}(V_{PV}-x_1^*) \\
x_4^* \\
x_5^* \\
\frac{1}{R_{4}}(V_{B}-x_4^*) \\
x_9^* \\
0 \\
x_9^*
\end{array}
\right].
\end{equation}
Moreover, any evolution starting from any (admissible) initial condition $x(0)$ asymptotically converges to it.
\end{theorem}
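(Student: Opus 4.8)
\textit{Overview and Step~1 (the equilibrium).} The plan is, first, to check that $x^e$ in (\ref{EQ_all_syste_equilibrium_points}) annihilates the right-hand side of (\ref{EQ_DC_microgrid}) and thereby to identify the equilibrium duty cycles; second, to construct $u_1,u_2,u_3$ by integrator backstepping, one converter at a time; third, to close the loop between the backstepping errors and the remaining grid-side voltages by a cascade argument. Substituting $x_1=x_1^*$ and $x_3=\frac{1}{R_1}(V_{PV}-x_1^*)$ kills $\dot x_1$, while $\dot x_3=0$ \emph{defines} $u_1^e=\bigl(x_2^*+R_{01}x_3^e-x_1^*\bigr)\big/\bigl(x_2^*+(R_{01}-R_{02})x_3^e\bigr)$, which is well posed precisely because the first inequality of (\ref{eq_all_condition_on_x9}) holds; then $\dot x_2=0$ with this $u_1^e$ is a quadratic in $x_2^*$ whose admissible root is (\ref{formula_z2_PV})--(\ref{formula_a2_PV}). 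The battery branch is identical, giving $u_2^e$ (well posed since $x_5\neq0$) and (\ref{formula_z5_BAT})--(\ref{formula_delta_BAT}); for the supercapacitor, $\dot x_7=0$ forces $x_8^e=0$ and $\dot x_8=0$ forces $u_3^e=x_9^*/V_S$ (well posed since $V_S>0$). Finally $\dot x_9=0$ at $x^e$, once the $x_7$--$x_9$ term vanishes because $x_7^e=x_9^*$, is exactly the current balance (\ref{eq_current_balance}) --- which is where that hypothesis enters.

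\textit{Step~2 (three backstepping loops).} For the PV branch set $z_1=x_1-x_1^*$, so $\dot z_1=-\frac{1}{R_1C_1}z_1-\frac{1}{C_1}z_3$ with $z_3=x_3-x_3^e$ when the inductor current $x_3$ is treated as a virtual control with reference $x_3^e$; then pick $u_1$ --- dividing by $\frac{1}{L_3}(x_2+(R_{01}-R_{02})x_3)\neq0$, legitimate by (\ref{eq_all_condition_on_x9}) --- so that $\dot z_3=\frac{1}{C_1}z_1-k_3z_3$ with $k_3>0$, which cancels the cross term in $W_{\mathrm{PV}}=\tfrac12(z_1^2+z_3^2)$ and yields $\dot W_{\mathrm{PV}}=-\frac{1}{R_1C_1}z_1^2-k_3z_3^2<0$; since this $u_1$ absorbs the whole $x_2$-dependence of $\dot x_3$, the $(z_1,z_3)$ subsystem is autonomous and exponentially stable. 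The battery branch is handled in the same way with $z_4=x_4-x_4^*$, $z_6=x_6-x_6^e$, $u_2$ (division by $x_5\neq0$), $W_{\mathrm B}=\tfrac12(z_4^2+z_6^2)$. For the supercapacitor, $z_7=x_7-x_9^*$ obeys $\dot z_7=-\frac{1}{R_7C_7}x_7-\frac{1}{C_7}x_8+\frac{1}{R_7C_7}x_9$; a virtual control $\alpha_7$ for $x_8$, affine in $x_7$ and $x_9$, gives $\dot z_7=-k_7z_7-\frac{1}{C_7}z_8$ with $z_8=x_8-\alpha_7$, and $u_3$ (division by $V_S>0$) is chosen so that $\dot z_8=\frac{1}{C_7}z_7-k_8z_8$; again $(z_7,z_8)$ is autonomous and exponentially stable, all the $\dot\alpha_i$ used along the way being computable from measured quantities.

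\textit{Step~3 (grid-side interconnection and conclusion).} What remains are the three capacitor voltages $x_2,x_5,x_7$ feeding the DC node; put $\xi=(x_2-x_2^*,\,x_5-x_5^*,\,x_9-x_9^*)$, with $x_7=x_9^*+z_7$. On $z=0$ the $\xi$-dynamics is, aside from the rational $u_1$- and $u_2$-induced terms in $\dot x_2,\dot x_5$, linear, $\dot\xi=A\xi$ with
\[ A=\begin{pmatrix} -\frac{1}{R_2C_2} & 0 & \frac{1}{R_2C_2}\\ 0 & -\frac{1}{R_5C_5} & \frac{1}{R_5C_5}\\ \frac{1}{R_2C_9} & \frac{1}{R_5C_9} & -\frac{1}{C_9}\bigl(\tfrac{1}{R_2}+\tfrac{1}{R_5}+\tfrac{1}{R_7}+\tfrac{1}{R_L}\bigr) \end{pmatrix}. \]
Rescaling by $\mathrm{diag}(C_2,C_5,C_9)$ turns $A$ into a symmetric, weakly diagonally dominant, irreducible matrix with a strictly dominant row, hence negative definite; equivalently the stored energy $V_{\mathrm{grid}}=\tfrac12\sum_{i\in\{2,5,9\}}C_i(x_i-x_i^*)^2$ decreases along $z=0$, and the rational corrections only \emph{add} damping --- they contribute terms proportional to $-\Delta_2$ and $-\Delta_5$, which are $\le0$ on the region where $\Delta_2,\Delta_5\ge0$ and the denominators in (\ref{eq_all_condition_on_x9}) keep their sign. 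Hence the isolated $\xi$-subsystem is exponentially stable with equilibrium $(x_2^*,x_5^*,x_9^*)$ (Step~1), and the full $\xi$-dynamics is this stable system perturbed by the exponentially vanishing $z$: a cascade / converging-input argument then gives $\xi\to0$, i.e. $x(t)\to x^e$, from every initial condition in the region just delimited (the meaning of ``admissible''), forward invariance of which follows from the sublevel sets of $W_{\mathrm{PV}}+W_{\mathrm B}+\tfrac12(z_7^2+z_8^2)+V_{\mathrm{grid}}$.

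\textit{Main obstacle.} The delicate point is Step~3: showing that the grid node, fed back through $x_2$ and $x_5$, is genuinely dissipative and that the rational terms produced by the inner controllers cannot destabilise it --- this is exactly where the sign conditions hidden in $\Delta_2\ge0$, $\Delta_5\ge0$, together with (\ref{eq_all_condition_on_x9}) and the requirement $u_i\in[0,1]$, pin down the region of operation. The three single-step backstepping designs and the verification of $x^e$ are, by comparison, routine.
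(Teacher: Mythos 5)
Your Steps~1 and~2 track the paper closely in spirit (the paper additionally inserts integral states $\alpha_1,\alpha_3,\alpha_4,\alpha_6$ into $u_1,u_2$ and proves stability of the resulting fourth-order linear error systems via the Routh table, but that is a design refinement rather than a different idea). Step~3, however, is a genuinely different architecture, and it is where your argument has a real gap. In the paper the supercapacitor is \emph{not} regulated to the constant $x_9^*$: the reference $z_7$ in (\ref{Eq_z7}) is a state-dependent function of $e_2$, $e_5$, $x_9$, $x_3(1-u_1)$ and $x_6(1-u_2)$, chosen precisely so that the virtual input $x_7$ cancels \emph{all} the cross terms in $\dot V_{2,5,9}$ of (\ref{eq_Lyap_259_dot_}) and leaves the clean negative semidefinite expression (\ref{Eq_interconnected_Lyapunov_dot}); $x_7$ and then $x_8$ backstep onto this moving target. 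You instead send $x_7\to x_9^*$ and ask the grid node to stabilise itself through the passivity of the RC network plus the claim that the $u_1$-, $u_2$-induced rational terms ``only add damping.''

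That claim is where the proof breaks. First, the damping sign of the rational term in $\dot e_2$ (resp.\ $\dot e_5$) is the sign of $-\Delta_2$ (resp.\ $-\Delta_5$), and $\Delta_2\ge0$, $\Delta_5\ge0$ are \emph{not} hypotheses of the theorem: from (\ref{formula_z5_BAT}) one has ${x_5^*}^2-x_9^*x_5^*=R_5C_5\Delta_5$, so $\Delta_5$ changes sign according to whether the battery branch injects or absorbs current, and in one of the two regimes your $\xi$-subsystem acquires an anti-damping term that must be \emph{dominated} by $-e_5^2/R_5$ --- something your argument does not establish, and certainly not ``from any admissible initial condition.'' Second, off the manifold $z=0$ the coupling terms $x_3(1-u_1)$ and $x_6(1-u_2)$ are not functions of the exponentially decaying $z$ alone: they carry $x_2$ and $x_5$ in their denominators, so the structure is not literally ``exponentially stable system driven by a vanishing input,'' and the cascade/ISS step you invoke needs an actual proof (no finite escape, denominators in (\ref{eq_all_condition_on_x9}) bounded away from zero along trajectories) that you do not supply. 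The paper's choice of $z_7$ is exactly the device that removes both difficulties at once --- the $e_2$ and $e_5$ couplings are cancelled rather than dominated, so no sign condition on $\Delta_2,\Delta_5$ and no cascade argument are needed --- at the price of a much more complicated, state-dependent supercapacitor reference and the extra nondegeneracy condition $x_9\neq0$. To complete your proof along your own lines you would either have to add the sign and ISS hypotheses explicitly (proving a weaker, regional statement), or replace Step~3 by the paper's cancellation.
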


\begin{proof}
The proof is based on the use of a Lyapunov function $V$ which is a composition of different Lyapunov functions, as illustrated in \cite{kundur2004powerstability}, \cite{B_kundur_balu_lauby_1994}. We use Proportional Integral (PI) control inputs $u_1$ and $u_2$  for properly controlling dynamics $x_1$, $x_3$, $x_4$ and $x_6$ in order to obtain a desired amount of power coming from the PV array and the battery. Then the control input $u_3$ focuses on the grid voltage regulating the interconnection among the systems. The control laws are developed by using a backstepping technique. The proposed Lyapunov function is
%
\begin{equation}\label{eq_lyap_entire}
V = V_{1,3} + V_{4,6} + V_{7} + V_{8} + V_{2,5,9}
\end{equation}
%
where all the terms are defined as follows.

Let us first focus on the control $u_1$, which is dedicated to dynamics $x_1$ and $x_3$: it is defined as
\small
\begin{align}\label{formula_control_PV}
u_1 &= \frac{1}{{x_{2}+(R_{01}-R_{02})x_3}} \left[-x_{1} + x_{2} + R_{01}x_3 -L_3 v_1 \right]
\end{align}%
\normalsize
with
%
\begin{align}\label{formula_control_PV_v1}
v_1 &=  K_3(x_3 - z_3) + \overline{K}_3\alpha_3  - C_1\overline{K}_1K_{1}^\alpha(x_1 - x_1^*)+\\
\nonumber&+\left(C_1K_1 - \frac{1}{R_1} \right)(K_1(x_1 - x_1^*)+ \overline{K}_1\alpha_1)
\end{align}
\begin{equation}\label{formula_eqpoint_x3_1}
z_3 = \frac{1}{R_{1}}(V_{PV} - x_{1}) + C_1K_1(x_1 - x_1^*) + C_1\overline{K}_1\alpha_1
\end{equation}
%
where the positive gains $K_3$, $\overline{K}_3$, $K_{3}^\alpha$, $\overline{K}_1$, $K_{1}^\alpha$, $K_1$, have to be properly chosen and $\alpha_1$, $\alpha_3$ are integral terms assuring zero error in steady state:
\begin{equation}\label{formula_alpha1}
\dot{\alpha}_1 = K_{1}^\alpha(x_1 - x_1^*) \:\:\:\:\:\:\:\: \dot{\alpha}_3 = K_{3}^\alpha(x_3 - z_3)
\end{equation}
%
%

An augmented system can be considered for the dynamics $x_1$ and $x_3$, where the state, the disturbance vector and the relating matrices are
%
\begin{equation}\label{EQ_converter_PV_linearizaed_system_state}
{\bar{x}}_{1,3} = \left[\:\: x_{1} \:\: \alpha_{1} \:\: x_{3} \:\: \alpha_{3} \:\:\right]^T
\end{equation}
%
%
\begin{equation}\label{EQ_converter_PV_linearizaed_d_state}
  {\bar{d}}_{1,3} = \left[\:\: V_{PV} \:\: x_{1}^* \:\:\right]^T
\end{equation}
\begin{equation}\label{EQ_converter_PV_linearizaed_system}
\dot{\bar{x}}_{1,3}  = A_{1,3}\bar{x}_{1,3} + D_{1,3} {\bar{d}}_{1,3}
\end{equation}
\footnotesize
\begin{equation}\label{eq_PV_augmented_state_x_A13}
A_{1,3} =
\left[
\begin{array}
[c]{cccc}%
-\frac{1}{R_1 C_1} & 0 & -\frac{1}{ C_1} & 0 \\
K_1^\alpha  & 0 & 0 & 0 \\
a_{31} & a_{32} & -K_3 & - \overline{K}_3 \\
K_3^{\alpha}\left(\frac{1}{R_1} - C_1 K_1 \right) & -K_3^{\alpha}C_1 \overline{K}_1 & K_3^{\alpha} & 0
\end{array}
\right]
\end{equation}
\normalsize
%
\begin{equation}\label{eq_PV_augmented_state_x_A13a31}
a_{31} = (K_3-K_1)\left( C_1 K_1 - \frac{1}{R_1} \right)  + C_1 \overline{K}_1 K_1^\alpha
\end{equation}
\begin{equation}\label{eq_PV_augmented_state_x_A13a32}
a_{32} = \overline{K}_1 \left( K_3 C_1 - K_1 C_1 + \frac{1}{R_1} \right)
\end{equation}
%
\begin{equation}\label{eq_PV_augmented_state_x_D13}
D_{1,3} =
\left[
\begin{array}
[c]{cc}%
\frac{1}{R_1 C_1} & 0 \\
0 & -K_1^\alpha  \\
\frac{K_3}{R_1} & d_{32} \\
-\frac{K_3^{\alpha}}{R_1} & C_1 K_1 K_3^{\alpha}
\end{array}
\right]
\end{equation}
%
\begin{equation}\label{eq_PV_augmented_state_x_D13d32}
d_{32} = -C_1 \overline{K}_1 K_1^\alpha + K_1\left( C_1 K_1 - K_3 C_1 - \frac{1}{R_1} \right)
\end{equation}
%
System (\ref{EQ_converter_PV_linearizaed_system}) has the following equilibrium points:
%
%
\begin{equation}\label{EQ_converter_PV_linearizaed_system_state_equilibrium}
  {\bar{x}}_{1,3}^e = \left[\:\: x_{1}^* \:\: 0 \:\: \frac{V_{PV}-x_1^*}{R_1} \:\: 0 \:\:\right]^T
\end{equation}
%
The characteristic polynomial is considered; to obtain stability all the terms need to be strictly positive, i.e. $p_3>0$, $p_2>0$, $p_1>0$, $p_0>0$.
%
\begin{equation}\label{eq_PV_augmented_state_eigen_pol}
p(\lambda) =  \lambda^4 + p_3 \lambda^3 + p_2 \lambda^2 + p_1 \lambda + p_0
\end{equation}
\begin{equation}\label{eq_PV_augmented_state_eigen_polp3}
p_3 = K_3 + \frac{1}{R_1 C_1} > 0
\end{equation}
\begin{align}\label{eq_PV_augmented_state_eigen_polp2}
p_2 &= \overline{K}_3 K_3^\alpha + \frac{1}{R_1 C_1}K_3 + \overline{K}_1 K_1^\alpha +\\
\nonumber &+ \left[(K_3-K_1)\left(  K_1 - \frac{1}{R_1C_1} \right) \right] > 0
\end{align}
\begin{align}\label{eq_PV_augmented_state_eigen_polp1}
p_1 &= \frac{1}{R_1 C_1}\left(\overline{K}_3 K_3^\alpha + \overline{K}_1 {K}_1^\alpha\right) +\\
\nonumber & + \overline{K}_1 {K}_1^\alpha \left(K_3- K_1 \right) + K_3^\alpha\left( K_1 - \frac{1}{R_1 C_1} \right) > 0
\end{align}
\begin{equation}\label{eq_PV_augmented_state_eigen_polp0}
p_0 = \overline{K}_1 \overline{K}_3  K_3^{\alpha} K_1^{\alpha} > 0
\end{equation}
%
%
Due to the hypothesis of positive gains, conditions (\ref{eq_PV_augmented_state_eigen_polp3}) and
(\ref{eq_PV_augmented_state_eigen_polp0}) are always satisfied; furthermore, $K_3>K_1$ and $K_1>\frac{1}{R_1 C_1}$ are sufficient
conditions for the (\ref{eq_PV_augmented_state_eigen_polp1}) and (\ref{eq_PV_augmented_state_eigen_polp2}) to be respected. Other conditions are given by the Routh criterium in Table \ref{table_Routh_PV}.

\begin{table}
  \centering
    \caption[Routh table for PV connected converter]{Routh table}\label{table_Routh_PV}
\begin{tabular}{|c|c|c|}
\hline 1 & $p_2$ & $p_0$ \\
\hline  $p_3$ & $p_1$ & 0 \\
\hline    $ p_2 - \frac{p_1}{p_3}$ & $p_o p_1 \frac{1}{p_3}$ &  \\
\hline    $ p_2 - p_0 p_1 \frac{1}{p_2 - \frac{p_1}{p_3}}$ & &  \\
\hline
\end{tabular}
\end{table}
\begin{equation}\label{eq_PV_augmented_state_eigen_polp2_table}
p_2 >\frac{p_1}{p_3}
\end{equation}
\begin{equation}\label{eq_PV_augmented_state_eigen_polp2_table2}
p_2-\frac{p_0p_1}{p_2-\frac{p_1}{p_3}}>0
\end{equation}
It can be shown that these conditions can be fulfilled with a proper choice of the parameter $K_3$. We do not include here the corresponding calculations for lack of space. As a result, an asymptotically stable linear system is obtained; then there will exist a Lyapunov function $V_{1,3}$ in the form of
%
\begin{equation}\label{Eq_PV_Lyapunov_for_linear}
 V_{1,3} = \frac{1}{2} (\bar{x}_{1,3} - \bar{x}_{1,3}^e)^T P_{1,3} (\bar{x}_{1,3} - \bar{x}_{1,3}^e) > 0
\end{equation}
%
with
%
\begin{equation}\label{Eq_PV_Lyapunov_dot_for_linear}
\dot{V}_{1,3} <0
\end{equation}
%
where the symmetric positive definite matrix $P_{1,3}$ is obtained by the Riccati equation $A_{1,3}^T P_{1,3} + P_{1,3}A_{1,3}=-Q_{1,3}$ such that (\ref{Eq_PV_Lyapunov_dot_for_linear}) is verified.

The control input $u_2$, which is dedicated to control the dynamics $x_4$ and $x_6$ and to ensure a desired charge/discharge behaviour of the battery imposing the reference $x_4^*$, is of the form of
\begin{align}\label{formula_control_BAT}
u_{2}&= \frac{1}{x_{5}}\left(-x_{4}+ x_{5} + R_{04}x_{6} + L_6v_2\right)
\end{align}
%
with
%
\begin{align}\label{formula_control_BAT_v2}
v_{2}=& -  K_6(x_6 - z_6) - \overline{K}_6 \alpha_6 + \overline{K}_4K_4^\alpha\left( x_4 -x_4^* \right) +\\
\nonumber & - \left( C_4 K_4 -\frac{1}{R_4}\right)\left( K_4(x_4 - x_4^*) + \overline{K}_4\alpha_4 \right)
\end{align}
%
where the positive gains $K_6$, $\overline{K}_6$,  $K_{6}^\alpha$, $\overline{K}_4$, $K_{4}^\alpha$, $K_4$, are properly chosen and 
\begin{equation}\label{Eq_battery_z6value}
z_6 = \left(\frac{1}{R_{4}}(V_{B}-x_{4}) + C_4K_4(x_4-x_4^*) +C_4\overline{K}_4\alpha_4\right)
\end{equation}
%
%
\begin{equation}\label{formula_alpha4}
\dot{\alpha}_4 = K_{4}^\alpha(x_4 - x_4^*) \:\:\:\:\:\:\:\:\:\dot{\alpha}_6 = K_{6}^\alpha(x_6 - z_6)
\end{equation}
%
with $\alpha_4$ and $\alpha_6$ being integral terms assuring zero error in steady state.

As for the previous case, an augmented system can be considered:
%
\begin{equation}\label{EQ_converter_BAT_linearizaed_system}
 \dot{\bar{x}}_{4,6}  = A_{4,6}\bar{x}_{4,6} + D_{4,6} {\bar{d}}_{4,6}
\end{equation}
%
%
%
\begin{equation}\label{EQ_converter_BAT_linearizaed_system_state}
  {\bar{x}}_{4,6} = \left[\: \: x_{4} \: \: \alpha_{4} \: \: x_{6} \: \: \alpha_{6} \: \:\right]^T
\end{equation}
\begin{equation}\label{EQ_converter_BAT_linearizaed_d_state}
  {\bar{d}}_{4,6} = \left[\: \: V_{B} \: \: x_{4}^* \: \:\right]^T
\end{equation}
%
%
%
The system in (\ref{EQ_converter_BAT_linearizaed_system}) has matrices $A_{4,6}$ and $D_{4,6}$ similar to $A_{1,3}$ and $D_{1,3}$ in (\ref{EQ_converter_PV_linearizaed_system}), with respect to the considered gains. The same considerations drive to the same asymptotic stability result; then there will exist a Lyapunov function $V_{4,6}$ in the form of
%
\begin{equation}\label{Eq_BAT_Lyapunov_for_linear}
 V_{4,6} = \frac{1}{2} (\bar{x}_{4,6} - \bar{x}_{4,6}^e)^T P_{4,6} (\bar{x}_{4,6} - \bar{x}_{4,6}^e) > 0
\end{equation}
with
%
\begin{equation}\label{Eq_BAT_Lyapunov_dot_for_linear}
 \dot{V}_{4,6} <0
\end{equation}
%
where the symmetric positive definite matrix $P_{4,6}$ is obtained by the Riccati equation $A_{4,6}^T P_{4,6} + P_{4,6}A_{4,6}=-Q_{4,6}$ such that (\ref{Eq_BAT_Lyapunov_dot_for_linear}) is verified.

Let us now focus on the control input $u_3$, which is determined to ensure voltage grid stability. It does not act directly. It does not act directly on the DC grid, but through the dynamics $x_8$ and $x_7$. Utilizing backstepping and Lyapunov methods, we can select the desired value for the dynamics to control the grid. The Lyapunov functions provided at each step will be used for the entire system, thereby leading to the study of
composite Lyapunov functions.


The function $V_{2,5,9}$ refers to dynamics $x_2$, $x_5$ and $x_9$; introducing the errors $e_2$ and $e_5$ between the dynamics and their equilibrium points as
\begin{equation}\label{eq_error2}
\nonumber  e_2 = x_2 - x_2^*, \: \: \: \: \: \: \: \:   e_5 = x_5 - x_5^*
\end{equation}
%
%
we can rewrite the equations as
%
\begin{equation}\label{}
\begin{array}{l}
\begin{cases}
\dot{e}_{2}=\frac{1}{R_{2}C_{2}}(x_9 - e_2 - x_2^*) + \frac{1}{C_{2}}x_{3}(1-u_{1})  \\ &\mbox{}
\\
\dot{e}_{5}=\frac{1}{R_{5}C_{5}}(x_9 - e_5 - x_5^*)+\frac{1}{C_{5}}x_{6}(1-u_{2})  \\ &\mbox{} \\
\dot{x}_{9} = \frac{1}{C_{9}}\left(\frac{1}{R_{2}}(e_2 + x_2^*-{x}_{9}) + \frac{1}{R_{5}}(e_5 + x_5^*-{x}_{9})\right) + \\&\mbox{} \\ +\frac{1}{C_{9}}\left(\frac{1}{R_{7}}(x_{7}-{x}_{9}) -\frac{1}{R_L }{x}_{9}\right) 
 \end{cases}
\end{array}%
\end{equation}
%
To find a proper controller, the $V_{2,5,9}$ can be defined as
%
\begin{equation}\label{Eq_interconnected_Lyapunov}
  V_{2,5,9} = \frac{C_2}{2}e_2^2 + \frac{C_5}{2}e_5^2 + \frac{C_9}{2}x_9^2 
\end{equation}
%
%
Then,
%
\begin{align}\label{eq_Lyap_259_dot_}
  \dot{V}_{2,5,9} &= -\frac{1}{R_{2}}e_2^2 + e_2\left( \frac{1}{R_{2}}(x_9 - x_2^*) + x_{3}(1-u_{1})\right) + \\
  \nonumber &-\frac{1}{R_{5}}e_5^2 + e_5\left(\frac{1}{R_{5}}(x_9 - x_5^*)+ x_{6}(1-u_{2})\right)\\
  \nonumber &+ x_9\left(\frac{1}{R_{2}}(e_2 + x_2^*-{x}_{9}) + \frac{1}{R_{5}}(e_5 + x_5^*-{x}_{9})\right) + \\ \nonumber &+x_9\left(\frac{1}{R_{7}}(x_{7}-{x}_{9}) -\frac{1}{R_L}{x}_{9}\right)
\end{align}
%
%
%
In equation (\ref{eq_Lyap_259_dot_}) the dynamics $x_7$ can be seen as control input; it can be properly chosen to obtain a desired form for the $\dot{V}_{2,5,9}$. By choosing the value $z_7$ for $x_7$ as
%
\begin{align}\label{Eq_z7}
  z_7 &= - R_7 \frac{1}{{x}_{9}}\left[e_2\left( \frac{1}{R_{2}}(x_9 - x_2^*) + x_{3}(1-u_{1})\right)\right] +\\
  \nonumber & - R_7 \frac{1}{{x}_{9}}\left[e_5\left(\frac{1}{R_{5}}(x_9 - x_5^*)+ x_{6}(1-u_{2})\right) \right] + \\
  \nonumber & +R_7 \left[\frac{{x}_{9}}{R_L} - \frac{1}{R_{2}}(e_2 + x_2^*-{x}_{9})- \frac{1}{R_{5}}(e_5 + x_5^*-{x}_{9}) \right] + \\
  \nonumber & + \frac{1}{{x}_{9}}\left(-{x_9^*}^2 + 2 x_9x_9^*\right) 
\end{align}
%
the $\dot{V}_{2,5,9}$ results to be semidefinite negative. To prove asymptotic stability the set $\Omega$ is considered: it is the largest invariant set of the set $E$ of all points where the Lyapunov function is not decreasing. $\Omega$ contains an unique point; then applying LaSalle's theorem asymptotic stability of the equilibrium point can be established.
%
\begin{align}\label{Eq_interconnected_Lyapunov_dot}
  \dot{V}_{2,5,9} &= -\frac{1}{R_{2}}e_2^2 -\frac{1}{R_{5}}e_5^2 -\frac{1}{R_{7}}(x_9-x_9^*)^2\leq 0
\end{align}
%
%
\begin{align}\label{eq_omega _forsemidef}
  \Omega =& \{(e_2,e_5,x_9) \: : \: x_2=x_2^*, \: x_5=x_5^*, \: x_9=x_9^* \} = \\
\nonumber  =& \{(0,0,x_9^*)\}
\end{align}
%

In order to calculate $z_8(t)$ such that $x_8$ backsteps the value of $x_7$ to the desired value $z_7$, we use the Lyapunov function
\begin{equation}\label{EQ_Lyapunov_SC7}
V_{7} = \frac{1}{2}(x_7 - z_7)^2
\end{equation}
where the desired dynamics for $x_7$ is
\begin{equation}\label{EQ_SC_x7_mod}
\dot{x}_{7} = -{K_7}(x_7 - z_7)
\end{equation}
%
with a gain $K_7>0$. By Lyapunov function time derivative calculation, we obtain the reference $z_8$:
%
\begin{equation}\label{eq_z8}
z_{8} =  C_{7}K_7(x_7 - z_7) - \frac{1}{R_{7}}(x_{7}-x_9) - C_{7}\dot{z}_7 
\end{equation}
%
%
%
Indeed the Lyapunov derivative is negative definite if the value of $x_8$ is properly chosen as $z_8$ in (\ref{eq_z8}):
%
\begin{align}\label{EQ_Lyapunov_SC7_}
\dot{V}_{7} &= (x_7 - z_7)(\dot{x}_7 - \dot{z}_7) = \\
\nonumber & = (x_7 - z_7)\left(-\frac{1}{R_{7}C_{7}}x_{7}-\frac{1}{C_{7}}x_{8}+\frac{1}{R_{7}C_{7}}{x}_{9} - \dot{z}_7\right)
\end{align}
%
%
\begin{equation}\label{EQ_Lyapunov_SC7_dot}
\dot{V}_{7} = -K_7(x_7 - z_7)^2 < 0
\end{equation}
%
To calculate the control input $u_3$ we again use backstepping technique: we can obtain it by using the Lyapunov function
%
\begin{equation}\label{EQ_Lyapunov_SC8}
V_{8} =  \frac{1}{2}(x_8 - z_8)^2 
\end{equation}
%
whose time derivative is
\begin{align}\label{EQ_Lyapunov_SC8_}
\dot{V}_{8} &=  (x_8 - z_8)(\dot{x}_8 - \dot{z}_8)  = \\
\nonumber & =(x_8 - z_8)\left(\frac{1}{L_{8}}V_{S}u_3 -\frac{R_{08}}{L_{8}}x_{8} -\frac{1}{L_{8}} x_7 - \dot{z}_8\right) 
\end{align}
%

To have a definite positive $\dot{V}_8$ the control input must be
\begin{align}\label{formula_control_SCtheo}
u_3 =& \frac{1}{V_{S}}\left[x_7 + R_{08}x_8 + L_8 \dot{z}_8 - L_8 K_8(x_8 - z_8)\right]
\end{align}
%
with $K_8>0$ and constant,
%
%
\begin{equation}\label{}
\dot{z}_{8} =  C_{7}K_7(\dot{x}_7 - \dot{z}_7) - \frac{1}{R_{7}}(\dot{x}_{7}-\dot{x}_9) - C_{7} \ddot{z}_7
\end{equation}
%
and where the function $\ddot{z}_7$ is the time derivative of $\dot{z}_7$.
%
\begin{align}\label{EQ_Lyapunov_SC8_dot}
\dot{V}_{8} &= (x_8 - z_8)(\dot{x}_8 - \dot{z}_8) = -K_8(x_8 - z_8)^2< 0
\end{align}
%
Then, using the control laws defined in (\ref{formula_control_PV}), (\ref{formula_control_BAT}) and (\ref{formula_control_SCtheo}), in accordance with (\ref{Eq_PV_Lyapunov_for_linear}), (\ref{Eq_BAT_Lyapunov_for_linear}), (\ref{Eq_interconnected_Lyapunov}), (\ref{EQ_Lyapunov_SC7}) and (\ref{EQ_Lyapunov_SC8}), we have positive definite Lyapunov functions $V_{1,3}>0$, $V_{4,6}>0$, $V_{2,5,9}>0$, $V_7>0$, $V_8>0$ such that their time derivatives $\dot{V}_{1,3}<0$, $\dot{V}_{4,6}<0$, $\dot{V}_{2,5,9}\leq0$, $\dot{V}_7<0$, $\dot{V}_8<0$ ensure stability, according to (\ref{Eq_PV_Lyapunov_dot_for_linear}), (\ref{Eq_BAT_Lyapunov_dot_for_linear}), (\ref{Eq_interconnected_Lyapunov_dot}), (\ref{EQ_Lyapunov_SC7_dot}) and (\ref{EQ_Lyapunov_SC8_dot}).

The composite positive definite Lyapunov function $V$ in (\ref{eq_lyap_entire}) results to have a negative semidefinite time derivative $\dot{V}$; LaSalle's theorem ensures asymptotic stability of the equilibrium $x^e$ of entire system describing the DC microgrid:
%
\begin{equation}\label{}
  \dot{V} = \dot{V}_{1,3} + \dot{V}_{4,6} + \dot{V}_{7} + \dot{V}_{8}+ \dot{V}_{2,5,9} \leq 0
\end{equation}
%
%
\end{proof}
%
%

As proven in the previous theorem, the unconstrained control laws $u_1$, $u_2$ and $u_3$ solve our problem. When considering a realistic application, control laws must be bounded: $u_1\in[0,1]$, $u_2\in[0,1]$ and $u_3\in[0,1]$. These bounds also impose limitations for $x_1^*$, $x_4^*$, $x_9^*$. Bounds on $x_1^*$ have already been considered in the theorem to have a current coming from the PV array. A bounded $u_2$ imposes bounds on $x_{9}^*$, $x_{9}^*\in(max(V_{PV},V_B),V_{S})$, and on $x_4^*$: $x_{4}^{\ast}\in\left[ \gamma_2 V_{B}, \beta(x_{9}^*,V_B)\right]  $, where $\gamma_2 =  \frac{R_{04}}{R_4}\frac{1}{1+\frac{R_{04}}{R_4}}$ and
\begin{equation}\label{}
\beta(x_{9}^*,V_B) = \frac{x_{9}^* + \left(\frac{R_5}{R_4}-\frac{R_{04}}{R_4} \right)V_B}{1+\frac{R_5}{R_4}-\frac{R_{04}}{R_4}}
\end{equation}
When considering the bounds $u_1\in[0,1]$ and $u_2\in[0,1]$, bounds on the quantity ${R_L}$ must be satisfied as well: indeed, given a $x_9^*>0$, only the values of ${R_L}\in\Omega_{R_L}$, where
%
\begin{align}\label{}
\Omega_{R_L}&= \{ {R_L} : x_4^*\in[\gamma_2V_{B},\beta(x_{9}^*,V_{B})] \\
\nonumber& \text{ for some }x_1^*\in[\gamma_1V_{PV},V_{PV}] \}
\end{align}
%
is the set such that the condition (\ref{eq_current_balance}) is satisfied with respect to physical limitation of all the components of the circuits.

%
Let $X=\R^{9}$ be the state space of the closed loop system. Given the state feedback control laws $u_{1}:X\rightarrow \R$, $u_{2}:X\rightarrow \R$, $u_{3}:X\rightarrow \R$, in (\ref{formula_control_PV}), (\ref{formula_control_BAT}), (\ref{formula_control_SCtheo}), for any value of the used gains we need to compute the maximal set
\begin{equation}\label{eq_set_omegaK}
  \Omega_{K}\subset X
\end{equation}
which is invariant for the closed loop dynamical system, and is such that $u_{1}(z)\in\left[  0,1\right]  $, $u_{2}(z)\in\left[  0,1\right]  $, $u_{3}(z)\in\left[  0,1\right]  $, $\forall z\in\Omega_{K}$. Such a maximal set is well defined, because the family of all invariant set in $X$ is closed under union. The set $\Omega_{K}$ is hard to describe, but we can ensure that it is not empty.
\begin{theorem}
For any ${R_L}\in\Omega_{R_L}$, $\forall$ $x_{1}^{\ast}\in\left[\gamma_1 V_{PV}, V_{PV}\right]$ : $x_{4}^{\ast}\in\left[ \gamma_2 V_{B}, \beta(x_{9}^*,V_B)\right]  $, $\forall$ $x_9^*$ such that $max(V_{PV},V_B)<{x}_{9}^*<V_{S}$ there exist gains $K_{1}$, $\overline{K}_{1}$, $K_{1}^\alpha$, $K_{3}$, $\overline{K}_{3}$, $K_{3}^\alpha$, $K_{4}$, $\overline{K}_{4}$, $K_{4}^\alpha$, $K_{6}$, $\overline{K}_{6}$, $K_{6}^\alpha$, $K_7$, $K_8$ such that $\Omega_{K}\neq\emptyset$.
\end{theorem}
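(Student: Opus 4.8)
The goal is to exhibit one concrete nonempty positively invariant subset of $X$ on which all three control laws are feasible; then, by maximality, it is contained in $\Omega_K$ and we are done. The natural candidate is a small sublevel set of the composite Lyapunov function $V$ of Theorem~\ref{Th_all_syste_stability} centred at the equilibrium $x^e$. So, first I would fix the gains $K_1,\overline K_1,K_1^\alpha,K_3,\overline K_3,K_3^\alpha$ and $K_4,\overline K_4,K_4^\alpha,K_6,\overline K_6,K_6^\alpha$ so that every inequality used in the proof of Theorem~\ref{Th_all_syste_stability} holds (positivity of $p_0,\dots,p_3$, $K_3>K_1>\frac{1}{R_1C_1}$, the Routh conditions of Table~\ref{table_Routh_PV}, and the analogous ones for the battery branch), together with $K_7,K_8>0$. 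For such gains Theorem~\ref{Th_all_syste_stability} applies verbatim under the stated bounds on $x_1^*,x_4^*,x_9^*$ (which are implied by $R_L\in\Omega_{R_L}$, $x_1^*\in[\gamma_1V_{PV},V_{PV}]$, $x_4^*\in[\gamma_2V_B,\beta(x_9^*,V_B)]$ and $\max(V_{PV},V_B)<x_9^*<V_S$), so $x^e$ is an asymptotically stable equilibrium of the closed loop, with positive definite $V$ as in \eqref{eq_lyap_entire} satisfying $\dot V\le 0$.

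Next I would evaluate the control laws at $x^e$. Since $x^e$ forces $\alpha_1=\alpha_3=\alpha_4=\alpha_6=0$, $x_3^e=\frac1{R_1}(V_{PV}-x_1^*)$, $x_6^e=\frac1{R_4}(V_B-x_4^*)$, hence $z_3=x_3^e$, $z_6=x_6^e$ and $v_1=v_2=0$, while the current balance \eqref{eq_current_balance} gives $z_7=x_9^*=x_7^e$ and therefore $z_8=0=x_8^e$ (using $\dot z_7=0$ at an equilibrium), the three controls collapse to
\begin{align*}
u_1(x^e)&=1-\frac{x_1^*-\frac{R_{02}}{R_1}(V_{PV}-x_1^*)}{x_2^*+a_2},\qquad
u_2(x^e)=1-\frac{x_4^*-\frac{R_{04}}{R_4}(V_B-x_4^*)}{x_5^*},\\
u_3(x^e)&=\frac{x_9^*}{V_S}.
\end{align*}
I would then check $u_j(x^e)\in[0,1]$. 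The value $u_3(x^e)\in(0,1)$ is immediate from $0<x_9^*<V_S$. For $u_1$ and $u_2$, the lower bounds $x_1^*\ge\gamma_1V_{PV}$ and $x_4^*\ge\gamma_2V_B$, together with the definitions of $\gamma_1,\gamma_2$ and the correct sign of the denominators $x_2^*+a_2$ and $x_5^*$ read off from \eqref{formula_z2_PV}, \eqref{formula_a2_PV}, \eqref{formula_z5_BAT}, \eqref{formula_delta_BAT}, keep $u_1(x^e),u_2(x^e)\le 1$; the upper bounds $x_1^*\le V_{PV}$ and $x_4^*\le\beta(x_9^*,V_B)$ keep them $\ge 0$; and $R_L\in\Omega_{R_L}$ guarantees that \eqref{eq_current_balance} is feasible with such references, so the whole computation is consistent. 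When the references are interior to their intervals, each $u_j(x^e)$ lies in $(0,1)$.

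Finally, the denominators $x_2+(R_{01}-R_{02})x_3$, $x_5$, $x_9$ and $V_S$ are nonzero at $x^e$ by \eqref{eq_all_condition_on_x9} and by positivity, and $z_7,z_8$ together with their first time derivatives depend smoothly on the state there; hence $u_1,u_2,u_3$ are continuous on a neighbourhood of $x^e$, so there is an open ball $B\ni x^e$ with $u_j(B)\subseteq[0,1]$ for $j=1,2,3$. Since $V$ is positive definite about $x^e$ and $\dot V\le 0$, every sublevel set $\Omega_c=\{z\in X:V(z)\le c\}$ is positively invariant for the closed loop, and for $c>0$ small enough $\Omega_c$ is compact and $\Omega_c\subseteq B$. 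Such an $\Omega_c$ is then a nonempty invariant set on which all three controls take values in $[0,1]$; by maximality $\Omega_c\subseteq\Omega_K$, hence $\Omega_K\neq\emptyset$.

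The step I expect to be the real obstacle is the membership $u_1(x^e),u_2(x^e)\in[0,1]$: this is exactly where the algebraic definitions of $\gamma_1,\gamma_2,\beta$ and of $\Omega_{R_L}$ have to be converted into sign certificates, which forces one to carry the closed forms \eqref{formula_z2_PV}--\eqref{formula_delta_BAT} for $x_2^*,x_5^*$ through the computation and to pin down the sign of the relevant denominators. Everything after that — continuity of the feedbacks and the existence of a small invariant sublevel set — is a soft consequence of the asymptotic stability already proved in Theorem~\ref{Th_all_syste_stability}.
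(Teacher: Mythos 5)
Your core step is identical to the paper's: fix admissible gains, evaluate the three feedbacks at the equilibrium $x^{e}$, and check $u_{j}(x^{e})\in\left[0,1\right]$ from the bounds on $x_{1}^{\ast}$, $x_{4}^{\ast}$, $x_{9}^{\ast}$ and ${R_L}\in\Omega_{R_L}$; your closed-form expressions for $u_{1}(x^{e})$, $u_{2}(x^{e})$ and $u_{3}(x^{e})$ are correct. The paper stops there: since $x^{e}$ is an equilibrium of the closed loop, the singleton $\{x^{e}\}$ is already an invariant set on which the controls are admissible, so $x^{e}\in\Omega_{K}$ by maximality and no neighbourhood is needed. (The paper also makes a concrete gain choice --- it zeroes the integral gains $\overline{K}_{1},K_{1}^{\alpha},\dots$ and takes $K_1=\frac{1}{R_1C_1}$, $K_3=\frac{R_{01}}{L_3}$, etc. --- rather than invoking Theorem \ref{Th_all_syste_stability} with general gains; either works for existence.) Your additional sublevel-set construction would buy a set with nonempty interior, but it has two soft spots you should either repair or drop: (i) when a reference sits at an endpoint of its admissible interval, $u_{j}(x^{e})$ can equal $0$ or $1$ exactly, and continuity then no longer yields an open ball $B$ with $u_{j}(B)\subseteq\left[0,1\right]$; (ii) the composite $V$ in (\ref{eq_lyap_entire}) contains the term $\frac{C_9}{2}x_9^{2}$, centred at $x_9=0$ rather than at $x_9=x_9^{\ast}$, so its small sublevel sets are not neighbourhoods of $x^{e}$ (one would have to recentre that term before claiming $\Omega_c\subseteq B$). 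Neither issue is fatal, because falling back on the singleton $\{x^{e}\}$ --- which is all the claim $\Omega_{K}\neq\emptyset$ requires --- is exactly the paper's argument.
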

\begin{proof}
To prove controller existence, let us now consider the controllers $u_1$ and $u_2$ where no integral error correction terms are considered: the value of the gains $\overline{K}_{1}$, $K_{1}^\alpha$, $\overline{K}_{3}$, $K_{3}^\alpha$, $\overline{K}_{4}$, $K_{4}^\alpha$, $\overline{K}_{6}$, $K_{6}^\alpha$, is set to be zero. As we are neglecting the integral error dynamics, stability conditions to be respected are stated only by the formulas in (\ref{eq_PV_augmented_state_eigen_polp3}) and (\ref{eq_PV_augmented_state_eigen_polp2}). A choice of $K_{1}$ and $K_{3}$ ($K_{4}$ and $K_{6}$) respecting these conditions is done: $K_1 = \frac{1}{R_1 C_1}$ and $K_3 = \frac{R_{01}}{L_3}$ ($K_4 = \frac{1}{R_4 C_4}$ and $K_6 = \frac{R_{04}}{L_6}$). A simple choice of $K_{7}$ and $K_{8}$ respecting stability is $K_7 = \frac{1}{R_7 C_7}$ and $K_8 = \frac{R_{08}}{L_8}$.
%
The conditions $x_{1}^{\ast}\in\left[  \gamma_1 V_{PV},V_{PV}\right]  $, $x_{4}^{\ast}\in\left[ \gamma_2 V_{B}, \beta(x_{9}^*,V_B)\right]  $, ${R_L}\in\Omega_{R_L}$, $max(V_{PV},V_B)<{x}_{9}^*<V_{S}$  implies that $x^{e}$ is an equilibrium with $u_{1}(x^{e})\in\left[  0,1\right]  $, $u_{2}(x^{e})\in\left[  0,1\right]  $, $u_{3}(x^{e})\in\left[  0,1\right]  $. Therefore $x^{e}\in\Omega_{K}$. 
\end{proof}
\begin{corollary}
For any given ${R_L}\in\Omega_{R_L}$, $\forall$ $x_{1}^{\ast}\in\left[\gamma_1 V_{PV}, V_{PV}\right]$ : $x_{4}^{\ast}\in\left[ \gamma_2 V_{B}, \beta(x_{9}^*,V_B)\right]  $, $\forall$ $x_9^*$ such that $max(V_{PV},V_B)<{x}_{9}^*<V_{S}$, given the state feedback control laws $u_1$, $u_2$, $u_3$, defined in (\ref{formula_control_PV}), (\ref{formula_control_BAT}), (\ref{formula_control_SCtheo}), and for any initial state in $\Omega_{K}$, the state $z(t)$ of the closed loop system asymptotically converges to $x^{e}$, and $u_{1}(z(t))\in\left[0,1\right]  $, $u_{2}(z(t))\in\left[0,1\right]  $, $u_{3}(z(t))\in\left[0,1\right]  $, $\forall t\geq0$.
\end{corollary}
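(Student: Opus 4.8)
The plan is to obtain the statement as a direct consequence of Theorem~\ref{Th_all_syste_stability}, of the preceding Theorem, and of the invariance of $\Omega_{K}$, the main point being that \emph{on} $\Omega_{K}$ the closed loop driven by the saturated inputs coincides with the one analysed in Theorem~\ref{Th_all_syste_stability}. By the definition of $\Omega_{K}$ in (\ref{eq_set_omegaK}), every $z\in\Omega_{K}$ satisfies $u_{1}(z),u_{2}(z),u_{3}(z)\in[0,1]$ and, in particular, the denominators $x_{2}+(R_{01}-R_{02})x_{3}$, $x_{5}$ and $x_{9}$ appearing in (\ref{formula_control_PV}), (\ref{formula_control_BAT}), (\ref{formula_control_SCtheo}) do not vanish there, so condition (\ref{eq_all_condition_on_x9}) holds at every point of $\Omega_{K}$. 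Since $\Omega_{K}$ is forward invariant, a trajectory $z(\cdot)$ with $z(0)\in\Omega_{K}$ stays in $\Omega_{K}$; hence $u_{i}(z(t))\in[0,1]$ for all $t\ge0$ (which is already half of the claim), the hypotheses (\ref{eq_all_condition_on_x9}) of Theorem~\ref{Th_all_syste_stability} hold along $z(\cdot)$, and the closed loop vector field felt by $z(\cdot)$ is exactly the unconstrained one for which the composite Lyapunov function $V$ of (\ref{eq_lyap_entire}) was designed.

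For the convergence part I would first place the equilibrium inside the set: under $R_{L}\in\Omega_{R_{L}}$, $x_{1}^{\ast}\in[\gamma_{1}V_{PV},V_{PV}]$, $x_{4}^{\ast}\in[\gamma_{2}V_{B},\beta(x_{9}^{\ast},V_{B})]$ and $\max(V_{PV},V_{B})<x_{9}^{\ast}<V_{S}$, the computation in the proof of the preceding Theorem gives $u_{i}(x^{e})\in[0,1]$, so $x^{e}\in\Omega_{K}$ and $x^{e}$ is the equilibrium (\ref{EQ_all_syste_equilibrium_points}) of the closed loop restricted to $\Omega_{K}$. Then I would reuse $V=V_{1,3}+V_{4,6}+V_{7}+V_{8}+V_{2,5,9}$: it is positive definite about $x^{e}$ and, along $z(\cdot)$, $\dot V\le0$, so $V(z(t))$ is non-increasing and $z(\cdot)$ is bounded, using that $V$ is radially unbounded in the block error coordinates (because $P_{1,3},P_{4,6}\succ0$ and the quadratic terms in (\ref{Eq_interconnected_Lyapunov}), (\ref{EQ_Lyapunov_SC7}), (\ref{EQ_Lyapunov_SC8}) are positive definite). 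Consequently the $\omega$-limit set $\mathcal{L}$ of $z(\cdot)$ is nonempty, compact and invariant, $V$ is constant on $\mathcal{L}$, hence $\dot V\equiv0$ on $\mathcal{L}$, and --- exactly as in the proof of Theorem~\ref{Th_all_syste_stability}, where the largest invariant subset of $\{\dot V=0\}$ reduces to the single point $x^{e}$ --- one gets $\mathcal{L}=\{x^{e}\}$, i.e. $z(t)\to x^{e}$.

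The step I expect to be the genuine obstacle is the bookkeeping needed to run the invariance principle \emph{inside} $\Omega_{K}$ rather than on all of $X=\R^{9}$: $\dot V\le0$ is guaranteed only where the saturations are inactive, so one must make sure that the $\omega$-limit set $\mathcal{L}$ still lies in $\Omega_{K}$ (equivalently, that a trajectory starting in $\Omega_{K}$ cannot approach $\partial\Omega_{K}$). This follows if $\Omega_{K}$ is taken to be closed, or after replacing $\Omega_{K}$ by the connected component, containing the strictly interior equilibrium $x^{e}$, of the sublevel set $\{V\le V(z(0))\}$ on which all three controls stay in $[0,1]$; making this precise, together with the verification that the relevant sublevel sets of the composite $V$ are compact, is the only non-routine ingredient. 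Everything else is an immediate specialisation of Theorem~\ref{Th_all_syste_stability} and of the preceding Theorem.
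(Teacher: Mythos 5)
Your proposal is correct and follows the same route the paper intends: the paper in fact gives no explicit proof of this corollary, treating it as an immediate consequence of the forward invariance of $\Omega_{K}$ (which yields $u_{i}(z(t))\in[0,1]$ for all $t\ge 0$) together with the convergence conclusion of Theorem~\ref{Th_all_syste_stability}. Your additional care about running LaSalle's invariance principle inside $\Omega_{K}$ rather than on all of $X$ --- ensuring $x^{e}\in\Omega_{K}$, that the $\omega$-limit set stays in $\Omega_{K}$, and that the relevant sublevel sets of the composite $V$ are compact --- addresses a genuine subtlety that the paper silently glosses over, and is the right thing to make explicit.
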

%
%
%
\section{SIMULATION RESULTS}\label{sec_simulation_results_energy}
In this section we present some simulations that show the results obtained using the proposed control inputs. Such simulations are obtained
using Matlab. The values of the parameters for the model are depicted in Tables \ref{table_PV}. 
%
The simulation target is to correctly feed a load and to maintain the grid stability, which means to ensure no large variation in the DC grid voltage. The simulation time is twenty seconds. The considered reference value $x_9^*$ for the DC grid voltage is selected as $x_9^*=1000$ $V$. A secondary controller is supposed to provide the references to be reached each time interval; during that period the introduced control laws will bring the devices to operate in the desired points. The selected strategy assign to the PV and battery the duty to fulfill the losses into the network; the references will then be calculated according to the load current. The references are updated every second: during the first ten seconds the load is supposed to not vary during each time interval of a second. 

We can split the simulation in two parts: in the first one, which is from zero to ten seconds, the voltages of the PV array and of the battery are constant and the load resistance piecewise constant. Furthermore, the references provided by the higher level controller are exact and the supercapacitor is needed only for providing grid stability during the transient time needed by the converters that are connected to the PV and the battery. In the second part of the simulation, in addition to the step variation, the load is supposed to be time varying and disturbances acting on the PV voltage are considered (see Figure \ref{Fig_PV_BAT_SC_voltagesVpvVb}). To better represent any possible case, we have also simulated the case where the references do not fulfill the energy balance: there the supercapacitor will need to provide power during all the time window. Figure \ref{Fig_PV_BAT_SC_currentsx8iL} describes the values of the resistance $\frac{1}{R_L}$ over the time; we can see the described behaviour, which is piecewise constant for ten seconds and then becomes time-varying.
%
%
\begin{table}
  \centering
       \caption{Grid parameters.}\label{table_PV}
    \begin{tabular}{ | l | l | l | l |}
    \hline
    Parameter  & Value & Parameter  & Value \\ \hline
    $C_1$  & 0.1 F & $L_{3}$ & 0.033 H \\ \hline
    $C_2$ & 0.01 F & $R_{01}$ & 0.01 $\Omega$ \\ \hline
    $R_1$ & 0.1 $\Omega$ & $R_{02}$ & 0.01 $\Omega$ \\ \hline
    $R_2$ & 0.1 $\Omega$ & $C_4$   & 0.1 F \\ \hline
    $C_5$ & 0.01 F & $R_{04}$ & 0.01 $\Omega$ \\ \hline
    $R_4$ & 0.1 $\Omega$ & $R_{05}$ & 0.01 $\Omega$ \\ \hline
    $R_5$ & 0.01 $\Omega$ & $L_{6}$   & 0.033 H \\ \hline
        $C_7$  & 0.01 F & $L_{8}$ & 0.0033 H \\ \hline
    $R_{07}$ & 0.01 $\Omega$  & $R_{08}$ & 0.01 $\Omega$ \\ \hline
    $R_7$ & 0.1 $\Omega$ & $C_9$  & 0.0001 F  \\
    \hline
    \end{tabular}

\end{table}
%
%
\begin{figure}
  \centering
  \includegraphics[width=1\columnwidth]{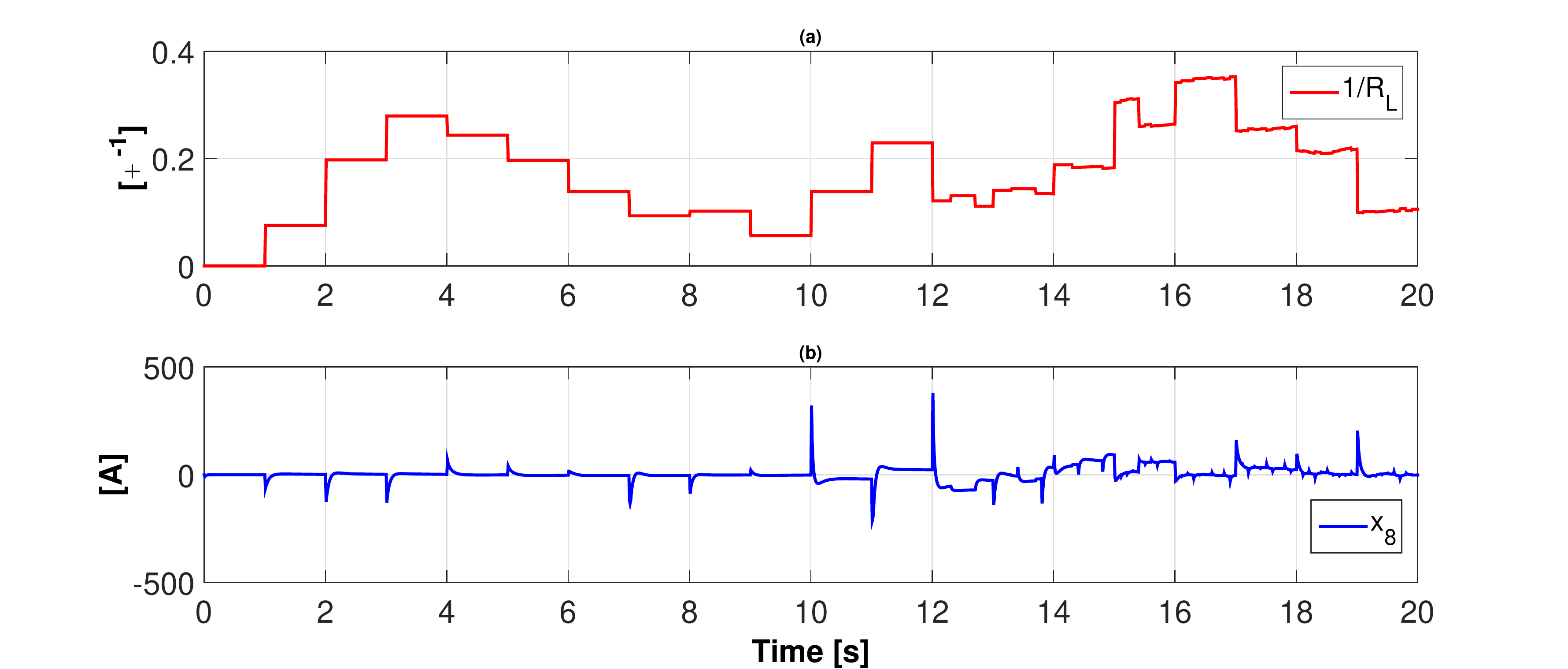}\\
  \caption[The load resistance $\frac{1}{R_L}$ in (a) and the current $x_8$ in (b)]{The load resistance $\frac{1}{R_L}$ in (a) and the current $x_8$ in (b).}\label{Fig_PV_BAT_SC_currentsx8iL}
\end{figure}
%
%
\begin{figure}
  \centering
  \includegraphics[width=1\columnwidth]{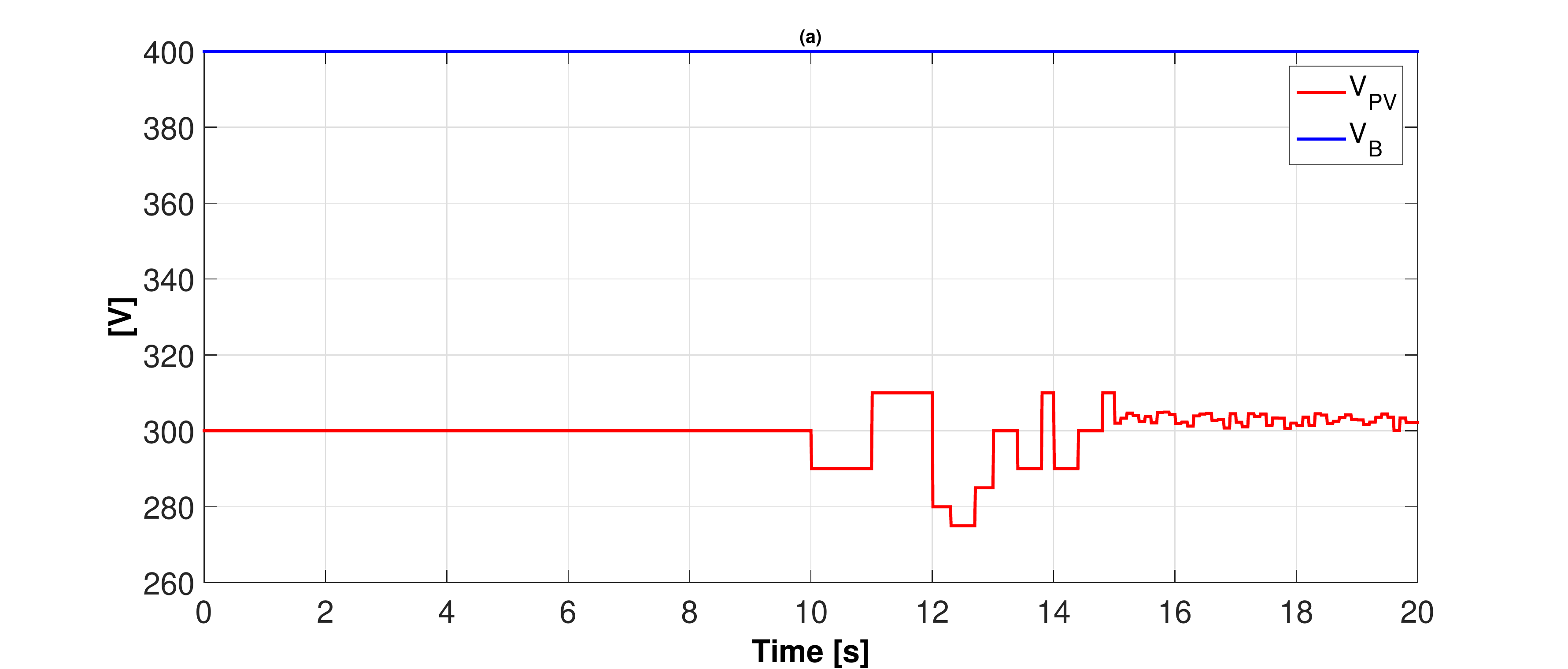}\\
  \caption[The voltages $V_{PV}$ and ${V_B}$]{The voltages of $V_{PV}$ (red line) and ${V_B}$ (blue line).}\label{Fig_PV_BAT_SC_voltagesVpvVb}
\end{figure}
%
\begin{figure}
  \centering
  \includegraphics[width=1\columnwidth]{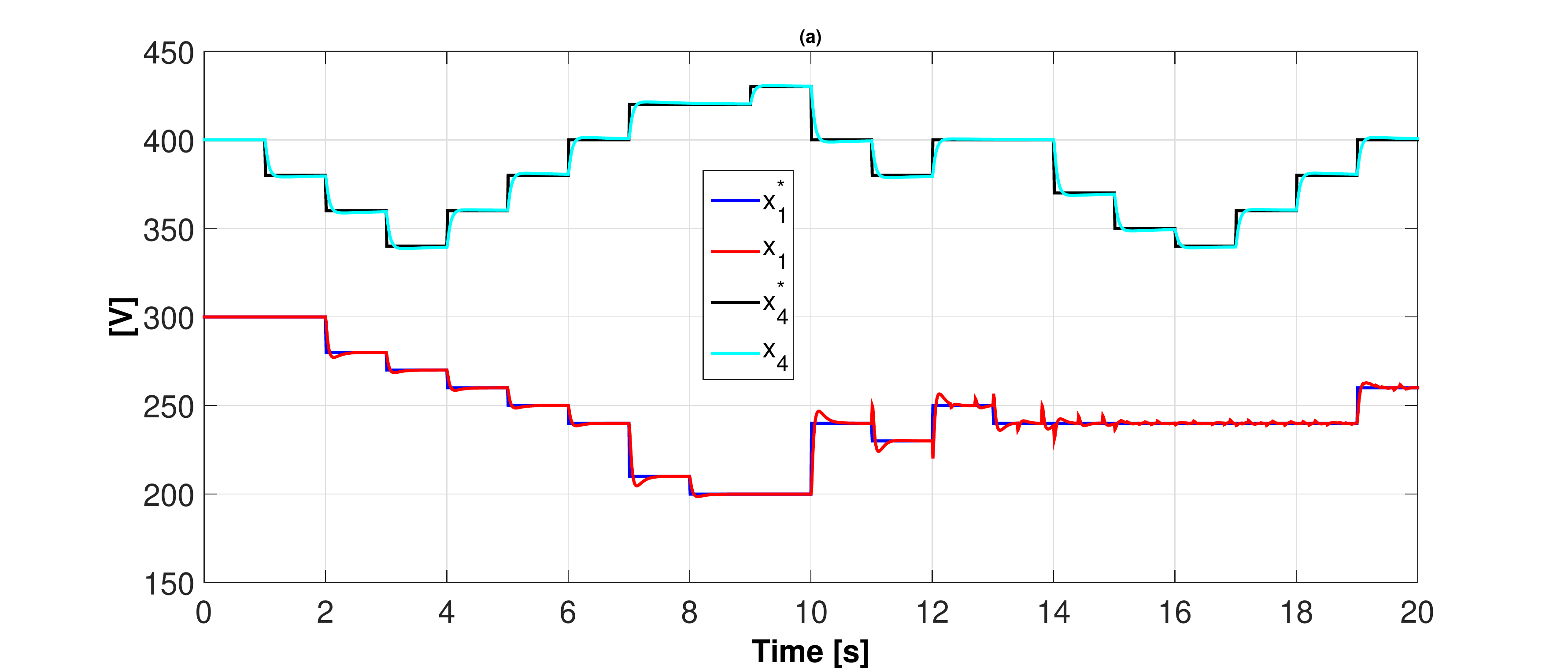}\\
    \vspace{-0.4cm}
  \caption[The voltages of ${C_1}$ and ${C_4}$]{The voltages of ${C_1}$ (red line) and ${C_4}$ (cyan line) following the desired references, the blue and black lines, respectively.}\label{Fig_PV_BAT_SC_voltagesx1x4}
\end{figure}
%
%
\begin{figure}
  \centering
  \includegraphics[width=1\columnwidth]{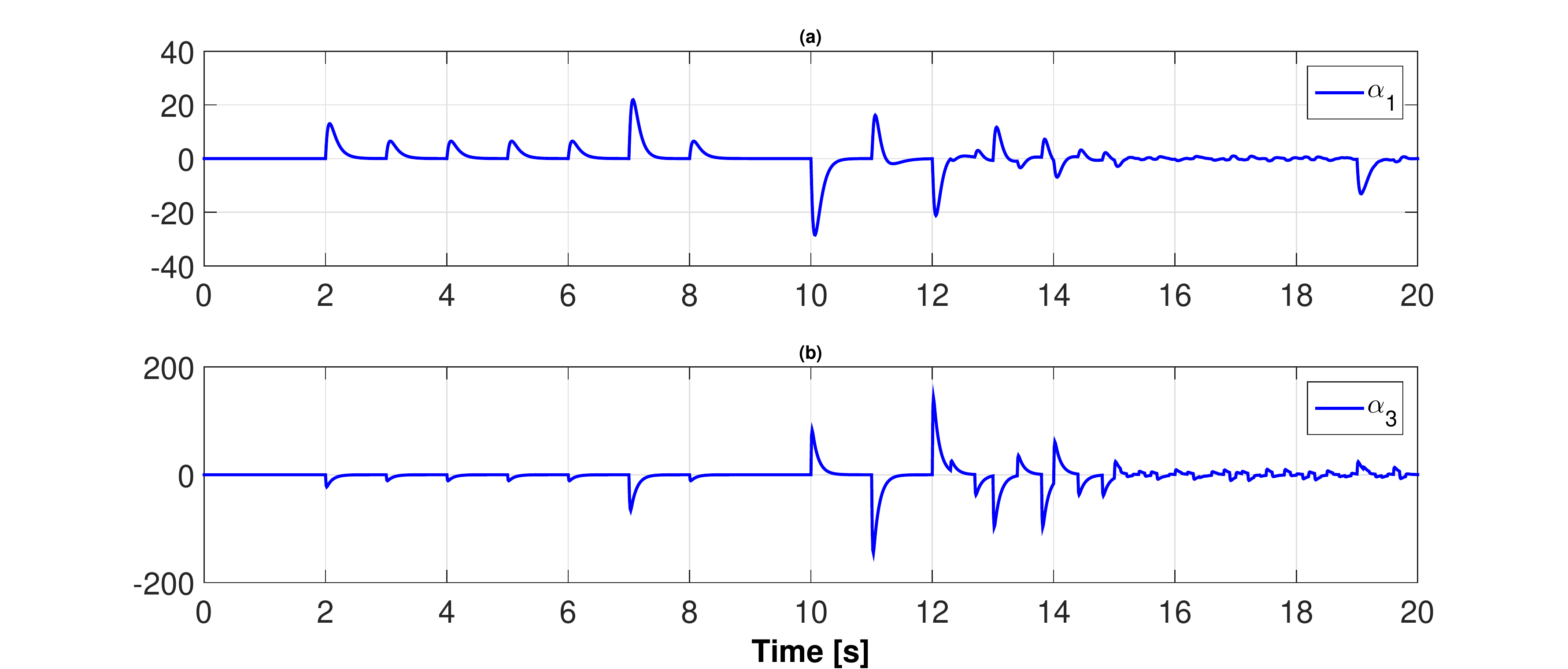}\\
  \caption[The integral terms used by the control law $u_1$]{The integral terms used by the control law $u_1$: in (a) $\alpha_1$, in (b) $\alpha_3$.}\label{Fig_PV_BAT_SC_alfa13}
\end{figure}
%
In accordance to the values of $\frac{1}{R_L}$, the references $x_1^*$ and $x_4^*$ are obtained for the power balance target. As depicted in Figure \ref{Fig_PV_BAT_SC_voltagesx1x4}, the $C_1$ and $C_4$ capacitor voltages reach the desired values during the considered time step. Here two different situations for the controllers are faced because the two devices need two different treatments; we need from the PV array to start providing the highest level of power as soon as possible, while the battery needs to have a smooth behaviour to preserve its life-time. The integral terms in the control action are introduced in Figures \ref{Fig_PV_BAT_SC_alfa13} and \ref{Fig_PV_BAT_SC_alfa46}; the considered eigenvalues for the systems are different because of the different targets. We note that both the charge and discharge battery situations are faced. Here we considered the voltage of the battery not to be affected by the current behaviour; indeed a constant value is used to represent it because the battery is supposed to be sized in such a way that it is not affected by current dynamics over a time of twenty seconds.
%
\begin{figure}
  \centering
  \includegraphics[width=1\columnwidth]{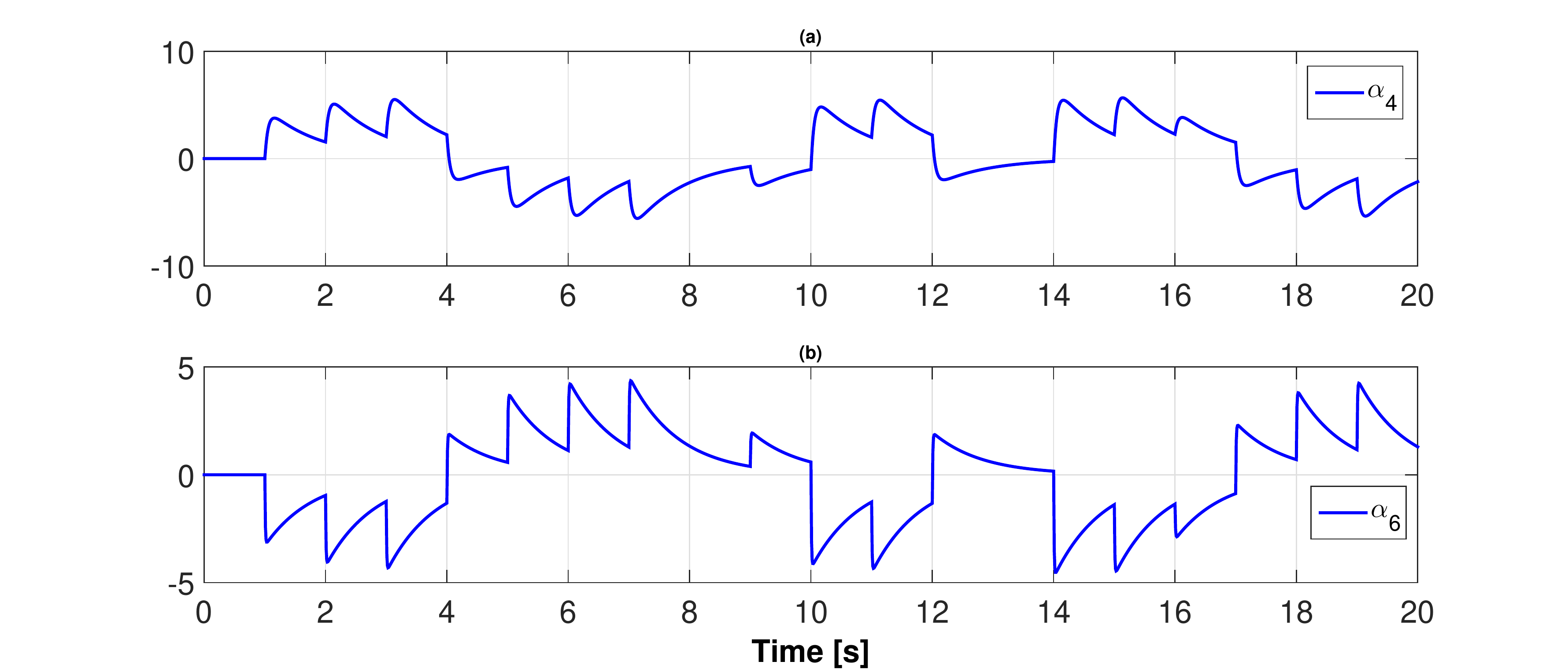}\\
  \caption[The integral terms used by the control law $u_2$]{The integral terms used by the control law $u_2$: in (a) $\alpha_4$, in (b) $\alpha_6$.}\label{Fig_PV_BAT_SC_alfa46}
\end{figure}
%
%
The resulting voltage dynamics on the grid connected capacitors, $C_2$ and $C_5$, are modified by the current flow generated by the sources; all the dynamics are stable, as shown in Figure \ref{Fig_PV_BAT_SC_voltagesx2x5}. Their evolution is influenced by the value of the DC grid voltage, which is the capacitor $C_9$: its value over time is depicted in Figure \ref{Fig_PV_BAT_SC_currentsx7x9}. To satisfy stability constraints, in response to the load variations and to the missing power coming from the PV and the battery, the voltage of the capacitor $C_7$ reacts balancing the energy variation.
Figure \ref{Fig_PV_BAT_SC_currents} describes the currents generated from the PV and the battery: these dynamics are dependent on the voltages and related to them.
%
%
\begin{figure}
  \centering
  \includegraphics[width=1\columnwidth]{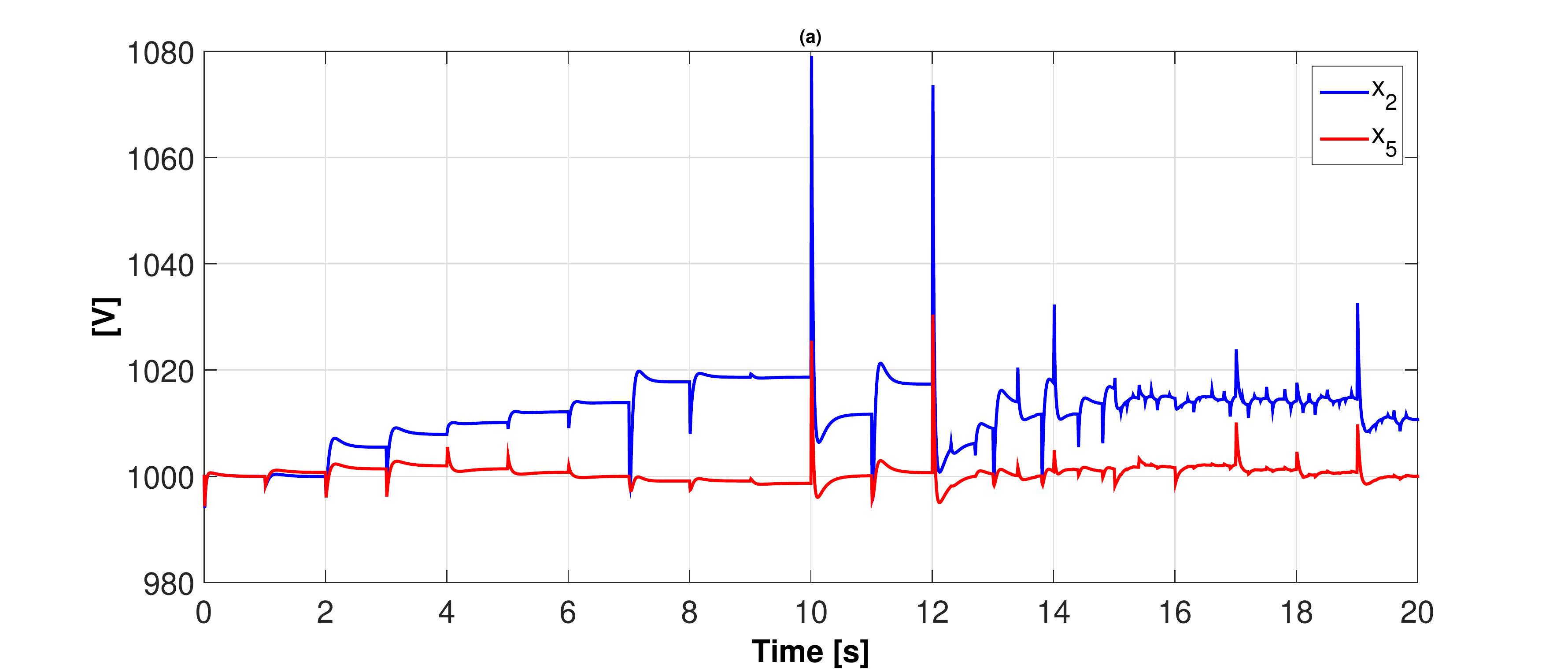}\\
  \caption[The voltages of ${C_2}$ and ${C_5}$]{The voltages of ${C_2}$ (blue line) and ${C_5}$ (red line).}\label{Fig_PV_BAT_SC_voltagesx2x5}
\end{figure}
%
\begin{figure}[h!]
  \centering
  \includegraphics[width=1\columnwidth]{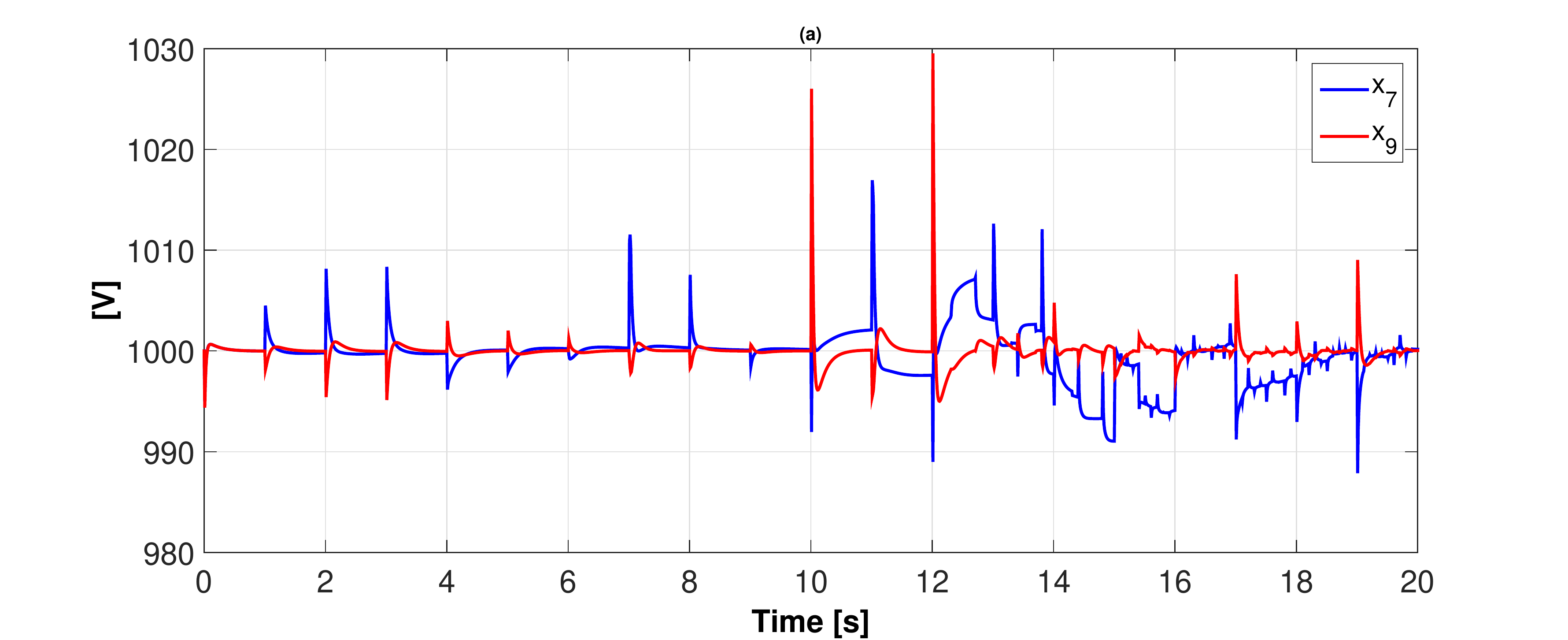}\\
  \caption[The voltages of ${C_7}$ and ${C_9}$]{The voltages of ${C_7}$ (blue line) and ${C_9}$ (red line).}\label{Fig_PV_BAT_SC_currentsx7x9}
\end{figure}

%
\begin{figure}[h!]
  \centering
  \includegraphics[width=1\columnwidth]{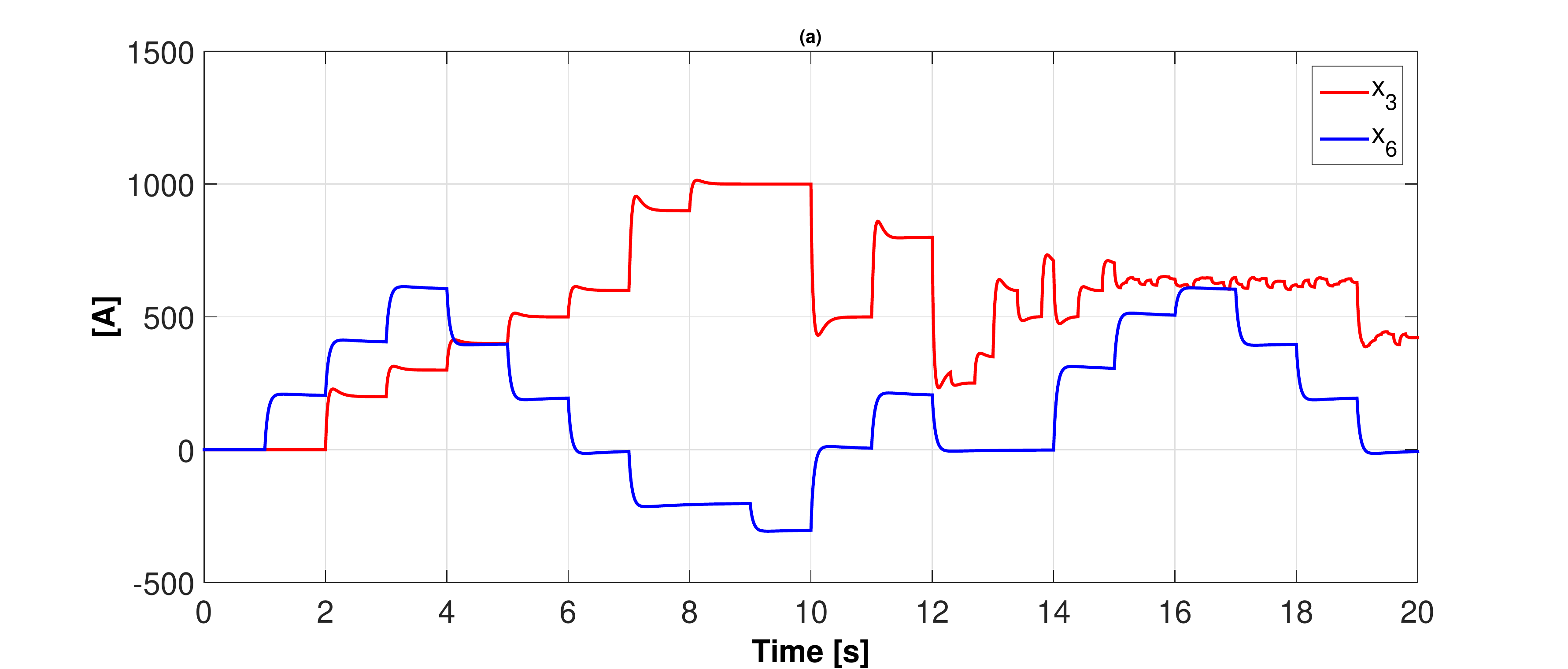}\\
  \caption[The currents $x_3$ and $x_6$]{The currents $x_3$ (red line) and $x_6$ (blue line).}\label{Fig_PV_BAT_SC_currents}
\end{figure}
%
\begin{figure}[h!]
  \centering
  \includegraphics[width=1\columnwidth]{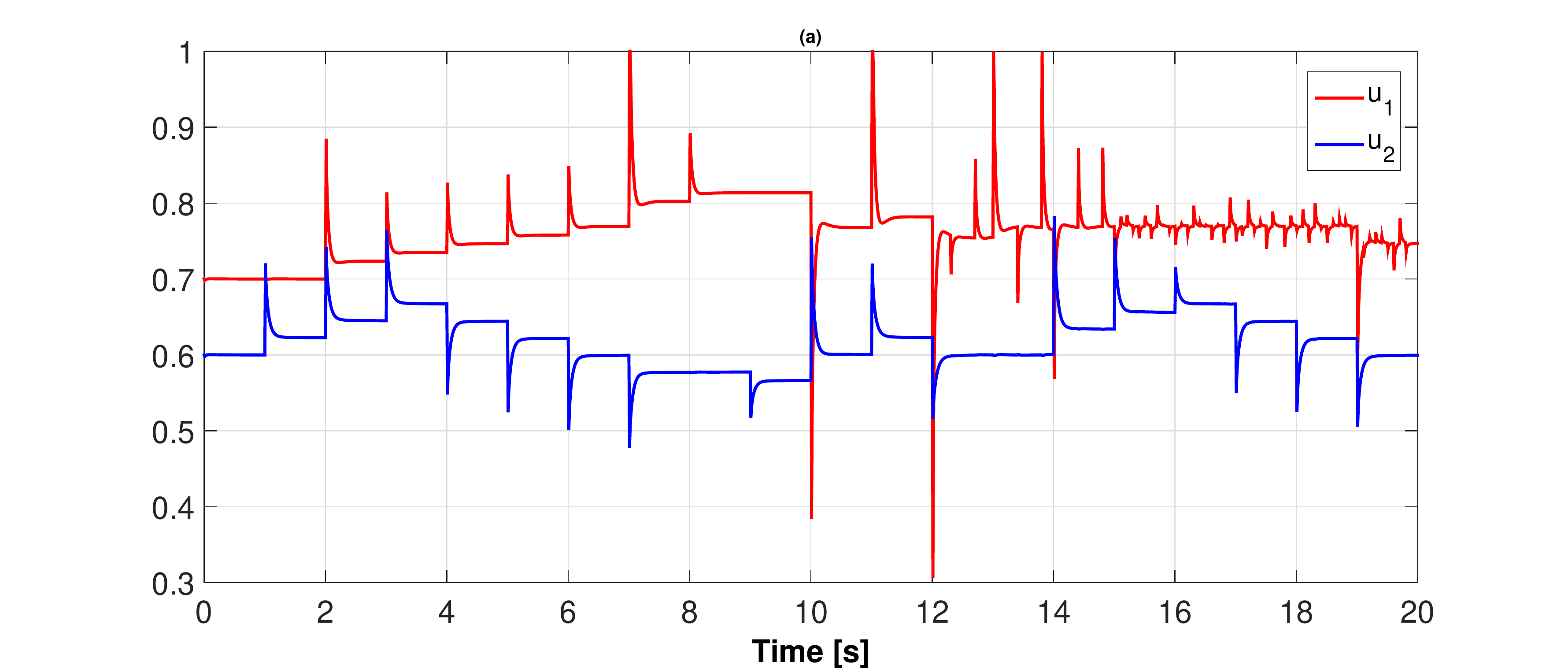}\\
  \caption[The control inputs $u_1$ and $u_2$]{The control inputs $u_1$ (red line) and $u_2$ (blue line).}\label{Fig_PV_BAT_SC_controlu1u2}
\end{figure}
%
Figure \ref{Fig_PV_BAT_SC_controlu1u2} introduces the generated control inputs $u_1$ and $u_2$, that are bounded
by the devices to be between zero and one, while Figure \ref{Fig_PV_BAT_SC_voltagesVpvVb} shows the voltages of the PV array and of the
battery. The control inputs are smooth except in the case of reference step variations.
The control input for the DC/DC converter connected to the supercapacitor is depicted in Figure \ref{Fig_PV_BAT_SC_controlu3}: its
variation depends on the variations of voltage $V_S$ (see Figure \ref{Fig_PV_BAT_SC_voltagesVs}) and of the load (see
Figure \ref{Fig_PV_BAT_SC_currentsx8iL}).
%
\begin{figure}
  \centering
  \includegraphics[width=1\columnwidth]{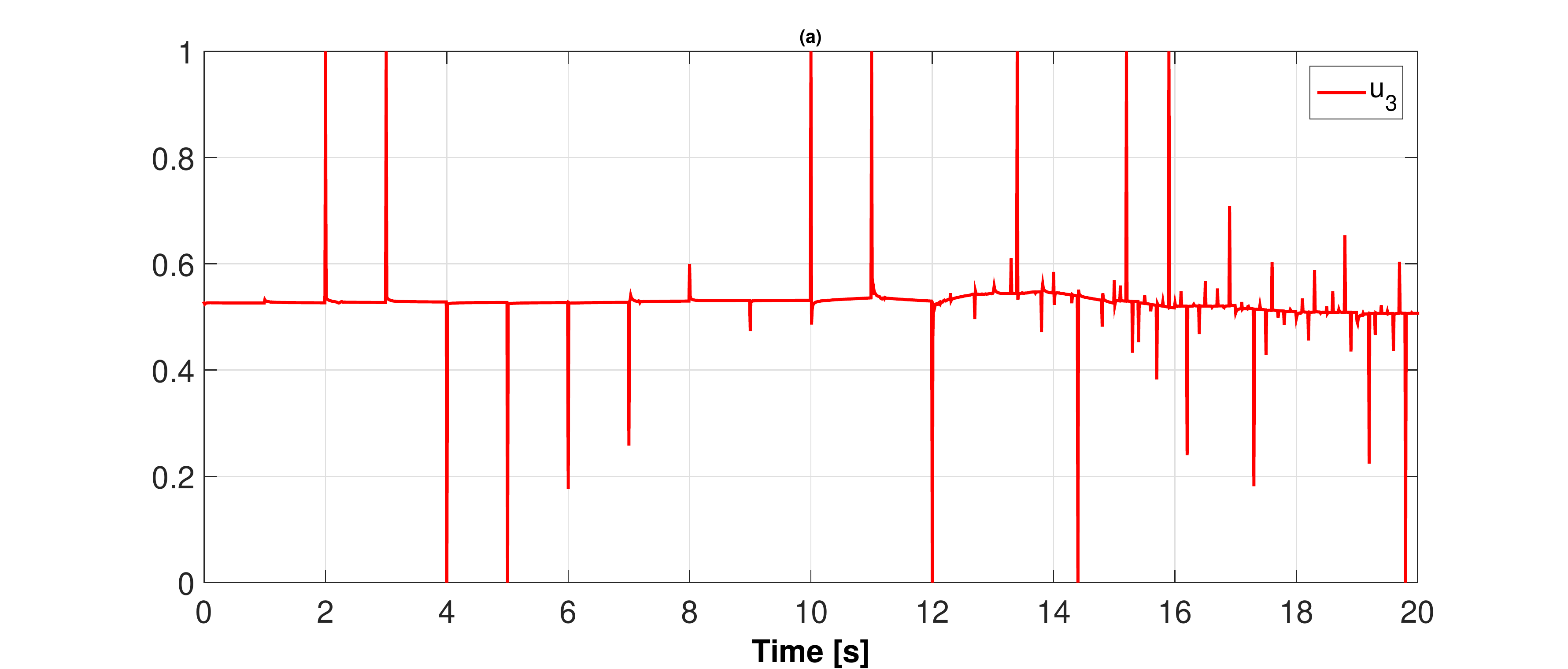}\\
  \caption{The control input $u_3$.}\label{Fig_PV_BAT_SC_controlu3}
\end{figure}
%
\begin{figure}
  \centering
  \includegraphics[width=1\columnwidth]{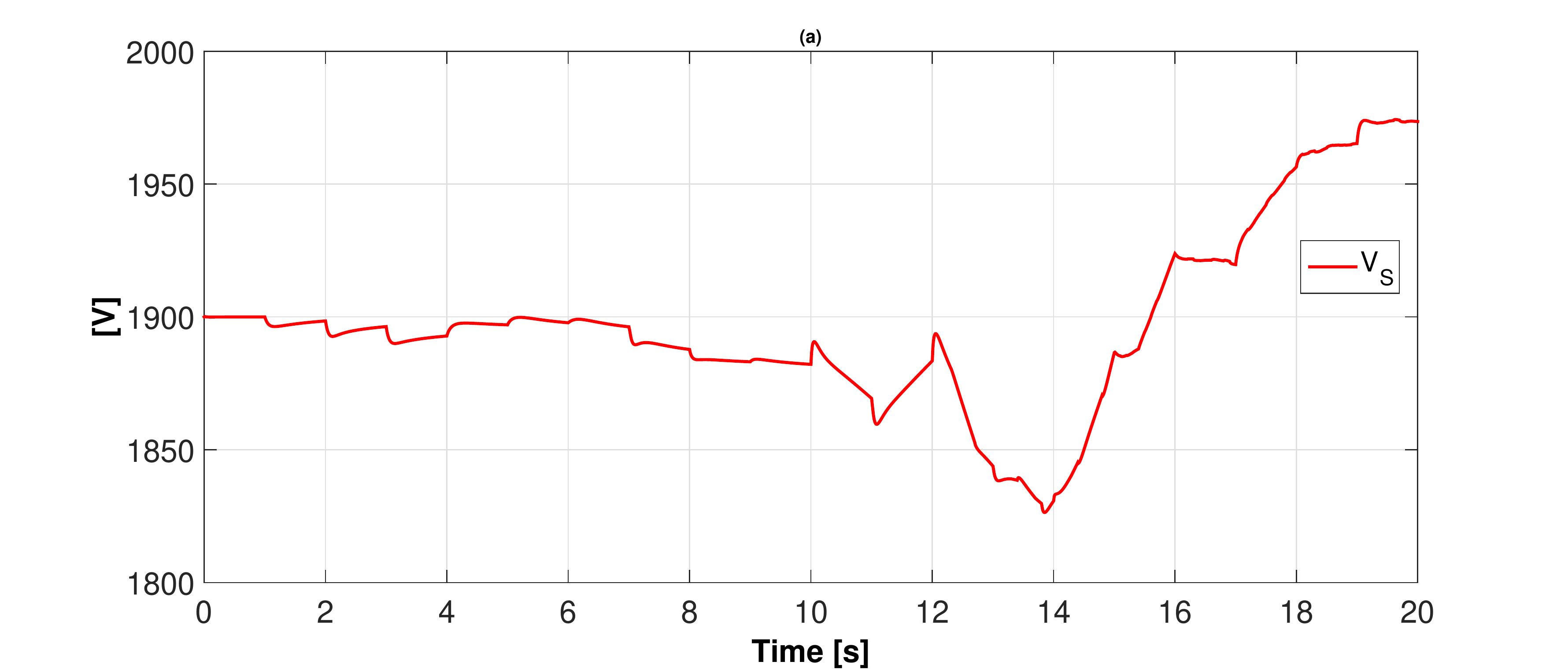}\\
  \caption{The voltage of the supercapacitor.}\label{Fig_PV_BAT_SC_voltagesVs}
\end{figure}
%
As results, the desired voltage for the DC microgrid is always kept (Figure \ref{Fig_PV_BAT_SC_currentsx7x9}). The developed control strategy is then shown to successfully operate in a wide range of situations: constant and time-varying load, acting of perturbation on the sources and big step variations of the references.
%
%
\section{CONCLUSIONS}\label{sec_conclusion}
In this paper a realistic DC MicroGrid composed by a PV array, two storage devices, a load and their connected devices is modeled. It is controlled in order to correctly provide a desired amount of power for feeding an uncontrolled bounded load while ensuring a desired grid voltage value. Hypotheses on the ad hoc size of the components are done to physically allow the power exchange. Stability analysis is carried out for the complete system and physical limitations are also considered. Simulations show the robustness of the adopted control action both during the transient and in steady-state operation mode in case of constant or time-varying load.



\bibliographystyle{ieeetr} 

\bibliography{mcnbib_traffic}

\begin{thebibliography}{10}

\bibitem{Eltawil2010PVconnectedgridproblems}
M.~A. Eltawil and Z.~Zhao, ``Grid-connected photovoltaic power systems:
  Technical and potential problems—a review,'' {\em Renewable and Sustainable
  Energy Reviews}, vol.~14, no.~1, pp.~112 -- 129, 2010.

\bibitem{Barton2004energystorage}
J.~Barton and D.~Infield, ``Energy storage and its use with intermittent
  renewable energy,'' {\em Energy Conversion, IEEE Transactions on}, vol.~19,
  pp.~441--448, June 2004.

\bibitem{Krajacic20112073}
G.~Krajačić, N.~Duić, Z.~Zmijarević, B.~V. Mathiesen, A.~A. Vučinić, and
  M.~da~Graça~Carvalho, ``Planning for a 100\% independent energy system based
  on smart energy storage for integration of renewables and $co_2$ emissions
  reduction,'' {\em Applied Thermal Engineering}, vol.~31, no.~13, pp.~2073 --
  2083, 2011.

\bibitem{Piagi2004}
R.~Lasseter and P.~Paigi, ``Microgrid: a conceptual solution,'' in {\em Power
  Electronics Specialists Conference, 2004. PESC 04. 2004 IEEE 35th Annual},
  vol.~6, pp.~4285--4290 Vol.6, June 2004.

\bibitem{Piagi2006}
P.~Piagi and R.~Lasseter, ``Autonomous control of microgrids,'' in {\em Power
  Engineering Society General Meeting, 2006. IEEE}, pp.~8 pp.--, 2006.

\bibitem{Iravani2007}
N.~Hatziargyriou, H.~Asano, R.~Iravani, and C.~Marnay, ``Microgrids,'' {\em
  Power and Energy Magazine, IEEE}, vol.~5, pp.~78--94, July 2007.

\bibitem{Guerrero2014LVDC}
T.~Dragicevic, J.~Vasquez, J.~Guerrero, and D.~Skrlec, ``Advanced lvdc
  electrical power architectures and microgrids: A step toward a new generation
  of power distribution networks.,'' {\em Electrification Magazine, IEEE},
  vol.~2, pp.~54--65, March 2014.

\bibitem{Guerrero2013advancedcontrol}
J.~Guerrero, P.~C. Loh, T.-L. Lee, and M.~Chandorkar, ``Advanced control
  architectures for intelligent microgrids part ii: Power quality, energy
  storage, and ac/dc microgrids,'' {\em Industrial Electronics, IEEE
  Transactions on}, vol.~60, pp.~1263--1270, April 2013.

\bibitem{siraramiirez_silva-ortigoza_2006}
H.~J. Sira~Ramirez and R.~Silva-Ortigoza, {\em Control design techniques in
  power electronics devices}.
\newblock Springer, 2006.

\bibitem{Rantzer2010comparisonHybridConverters}
S.~Mariethoz, S.~Almer, M.~Baja, A.~Beccuti, D.~Patino, A.~Wernrud, J.~Buisson,
  H.~Cormerais, T.~Geyer, H.~Fujioka, U.~Jonsson, C.-Y. Kao, M.~Morari,
  G.~Papafotiou, A.~Rantzer, and P.~Riedinger, ``Comparison of hybrid control
  techniques for buck and boost dc-dc converters,'' {\em Control Systems
  Technology, IEEE Transactions on}, vol.~18, pp.~1126--1145, Sept 2010.

\bibitem{A_chen_damm_cdc_2014}
Y.~Chen, G.~Damm, A.~Benchaib, and F.~Lamnabhi-Lagarrigue, ``Multi-time-scale
  stability analysis and design conditions of a vsc terminal with dc voltage
  droop control for hvdc networks,'' in {\em 53rd IEEE Conference on Decision
  and Control}.

\bibitem{chen:hal-01159853}
Y.~Chen, G.~Damm, A.~Benchaib, M.~Netto, and F.~Lamnabhi-Lagarrigue, ``{Control
  induced explicit time-scale separation to attain DC voltage stability for a
  VSC-HVDC terminal},'' in {\em {19th IFAC World Congress on International
  Federation of Automatic Control (IFAC 2014)}}, vol.~19, (Cape Town, South
  Africa), pp.~540--545, Aug. 2014.

\bibitem{A_tahim_pagano_lenz_stramosk_2015}
A.~P.~N. Tahim, D.~J. Pagano, E.~Lenz, and V.~Stramosk, ``Modeling and
  stability analysis of islanded dc microgrids under droop control,'' {\em IEEE
  Transactions on Power Electronics}, vol.~30, no.~8, pp.~4597--4607, 2015.

\bibitem{A_bidram_davoudi_lewis_guerrero_2013}
A.~Bidram, A.~Davoudi, F.~L. Lewis, and J.~M. Guerrero, ``Distributed
  cooperative secondary control of microgrids using feedback linearization,''
  {\em IEEE Transactions on Power Systems}, vol.~28, no.~3, pp.~3462--3470,
  2013.

\bibitem{Walker2004cascadedPVconnection}
G.~Walker and P.~Sernia, ``Cascaded dc-dc converter connection of photovoltaic
  modules,'' {\em Power Electronics, IEEE Transactions on}, vol.~19,
  pp.~1130--1139, July 2004.

\bibitem{Marx2012}
D.~Marx, P.~Magne, B.~Nahid-Mobarakeh, S.~Pierfederici, and B.~Davat, ``Large
  signal stability analysis tools in dc power systems with constant power loads
  and variable power loads; a review,'' {\em Power Electronics, IEEE
  Transactions on}, vol.~27, pp.~1773--1787, April 2012.

\bibitem{Hamache2014backstepping}
D.~Hamache, A.~Fayaz, E.~Godoy, and C.~Karimi, ``Stabilization of a dc
  electrical network via backstepping approach,'' in {\em Industrial
  Electronics (ISIE), 2014 IEEE 23rd International Symposium on}, pp.~242--247,
  June 2014.

\bibitem{Olivares2014Trendsmicrogrid}
D.~Olivares, A.~Mehrizi-Sani, A.~Etemadi, C.~Canizares, R.~Iravani,
  M.~Kazerani, A.~Hajimiragha, O.~Gomis-Bellmunt, M.~Saeedifard,
  R.~Palma-Behnke, G.~Jimenez-Estevez, and N.~Hatziargyriou, ``Trends in
  microgrid control,'' {\em Smart Grid, IEEE Transactions on}, vol.~5,
  pp.~1905--1919, July 2014.

\bibitem{sanders_noworolski_liu_verghese_1991}
S.~Sanders, J.~Noworolski, X.~Liu, and G.~C. Verghese, ``Generalized averaging
  method for power conversion circuits,'' {\em Power Electronics, IEEE
  Transactions on}, vol.~6, pp.~251--259, Apr 1991.

\bibitem{middlebrook_cuk_1977}
R.~Middlebrook and S.~Cuk, ``A general unified approach to modelling
  switching-converter power stages,'' {\em International Journal of
  Electronics}, vol.~42, no.~6, pp.~521--550, 1977.

\bibitem{Lifshitz2015Battery}
D.~Lifshitz and G.~Weiss, ``Optimal control of a capacitor-type energy storage
  system,'' {\em Automatic Control, IEEE Transactions on}, vol.~60,
  pp.~216--220, Jan 2015.

\bibitem{kundur2004powerstability}
P.~Kundur, J.~Paserba, V.~Ajjarapu, G.~Andersson, A.~Bose, C.~Canizares,
  N.~Hatziargyriou, D.~Hill, A.~Stankovic, C.~Taylor, T.~Van~Cutsem, and
  V.~Vittal, ``Definition and classification of power system stability
  ieee/cigre joint task force on stability terms and definitions,'' {\em Power
  Systems, IEEE Transactions on}, vol.~19, pp.~1387--1401, Aug 2004.

\bibitem{B_kundur_balu_lauby_1994}
P.~Kundur, N.~J. Balu, and M.~G. Lauby, {\em Power system stability and
  control}.
\newblock McGraw-Hill, 1994.

\end{thebibliography}

\end{document}